\theoremstyle{plain}
\newtheorem{theorem}{Theorem}
\newtheorem {lemma}{Lemma}
\newtheorem{proposition}{Proposition}
\theoremstyle{remark}
\newtheorem*{Remark 1}{Remark 1}
\newtheorem*{Remark 2}{Remark 2}
\newtheorem*{Remark 3}{Remark 3}
\newtheorem*{Remark 4}{Remark 4}
\numberwithin{equation}{section}
\begin{document}

\title[Brownian Motion with Random Jumps]%
 { Asymptotic Behavior of the Principal Eigenvalue for a Class of Non-Local Elliptic Operators   Related to Brownian Motion with Spatially Dependent Random Jumps}

\author{Nitay Arcusin and Ross G. Pinsky}
\address{Department of Mathematics\\
Technion---Israel Institute of Technology\\
Haifa, 32000\\ Israel} \email{nitay@techunix.technion.ac.il; pinsky@math.technion.ac.il}
\urladdr{http://www.math.technion.ac.il/~pinsky/}
\thanks{The  research of the second author  was supported by
 THE ISRAEL SCIENCE FOUNDATION (grant No.
449/07)}

\subjclass[2000]{ 35P15, 60F10, 60J65} \keywords{principal
eigenvalue, non-local differential operator, Brownian motion, random
space-dependent jumps}
\date{}

\begin{abstract}
Let $D\subset R^d$ be a bounded domain and let $\mathcal P(D)$
denote the space of probability measures  on $D$.  Consider a
Brownian motion in $D$ which is killed at the boundary and which,
while alive, jumps instantaneously according to a spatially dependent  exponential clock
with intensity $\gamma V$ to a new point,  according
to a distribution $\mu\in\mathcal P(D)$. From its new position after the jump, the process
repeats the above behavior independently of what has transpired
previously. The generator of this process is an extension of the
operator $-L_{\gamma,\mu}$, defined by
\begin{equation*}
L_{\gamma,\mu}u\equiv -\frac12\Delta u+\gamma V C_\mu(u),
\end{equation*}
with the Dirichlet boundary condition, where $C_\mu$ is the
  ``$\mu$-centering'' operator defined by
\begin{equation*}
C_\mu(u)=u-\int_Du~d\mu.
\end{equation*}
 The
 principal eigenvalue, $\lambda_0(\gamma,\mu)$, of $L_{\gamma,\mu}$ governs the exponential rate of decay of the probability of not
 exiting $D$ for large time. We study the asymptotic behavior of $\lambda_0(\gamma,\mu)$
as $\gamma\to\infty$. In particular, if $\mu$ possesses a density in a neighborhood of the boundary, which we call $\mu$, then
\begin{equation*}
\lim_{\gamma\to\infty}\gamma^{-\frac12}\lambda_0(\gamma,\mu)=\frac{\int_{\partial D}\frac\mu{\sqrt {V}}d\sigma}{\sqrt2\int_D\frac1{ V}d\mu}.
\end{equation*}
If $\mu$ and all its derivatives up to order $k-1$ vanish on the boundary, but the $k$-th derivative does not vanish identically on the boundary,
then $\lambda_0(\gamma,\mu)$ behaves asymptotically like $c_k\gamma^{\frac{1-k}2}$, for an explicit constant $c_k$.
\end{abstract}

\maketitle
\section{Introduction and Statement of Results}\label{S:intro}

Let $D\subset R^d$ be a bounded domain with $C^{2,\alpha}$-boundary ($\alpha\in (0,1]$) and let $\mathcal P(D)$ denote the space of probability measures on $D$.
Fix a measure $\mu\in\mathcal P(D)$, and
 consider a Markov process $X(t)$ in $D$ which performs Brownian motion and is killed at the
boundary, and which while alive, jumps instantaneously  according to a spatially dependent  exponential clock
with intensity $\gamma V$ to a new point,  according to the distribution $\mu$.
That is, the probability that the process $X(\cdot)$ has not jumped by time $t$, is given by $\exp(-\int_0^t\gamma V(X(s))ds)$.
 From its new position after the jump,
the process repeats the above behavior independently of what has
transpired previously.
 Let $\tau_D$ denote the  lifetime of the process. 
We assume that $V\in C^\alpha(\bar D)$ and that $V>0$ in $\bar D$, and we  normalize it by
$$
\int_DV(x)dx=1.
$$
 We will think of $V$ as being fixed and of $\gamma>0$ and $\mu$ as parameters that may be varied.
 Denote
probabilities and expectations for the process
starting from $x\in D$ by $P_x^{\gamma,\mu}$ and
$E_x^{\gamma,\mu}$.

Define the contraction semigroup
\begin{equation*}
T_t^{\gamma,\mu}f(x)=E_x^{\gamma,\mu}(f(X(t)); \tau_D>t),\  f\in C_0(\bar D),
\end{equation*}
where $C_0(\bar D)$ is the space of continuous functions on $\bar D$ vanishing on $\partial D$.
The infinitesimal generator of this semigroup is an extension of
the operator $-L_{\gamma,\mu}$, defined on
$C^2(\bar D)\cap\{u:u,L_{\gamma,\mu}u\in C_0(\bar D)\}$
by
\begin{equation*}
L_{\gamma,\mu}u\equiv -\frac12\Delta u+\gamma VC_\mu(u),
\end{equation*}
with the Dirichlet boundary condition, where $C_\mu$ is the
   ``$\mu$-centering'' operator defined by
\begin{equation*}
C_\mu(u)=u-\int_Du~d\mu.
\end{equation*}
The  operator $T_t^{\gamma,\mu}$ is  compact. These facts were proven in \cite{P09} in the case of constant $V$,
and can be proved similarly for variable $V$ as defined above. Since  $T_t^{\gamma,\mu}$ is  compact, the resolvent operator
for $T_t^{\gamma,\mu}$ is also  compact, and consequently  the spectrum $\sigma(L_{\gamma,\mu})$ of $L_{\gamma,\mu}$
consists exclusively of eigenvalues.
By the Krein-Rutman theorem, one deduces that $L_{\gamma,\mu}$ possesses a principal eigenvalue,
$\lambda_0(\gamma,\mu)$; that is, $\lambda_0(\gamma,\mu)$ is real and simple  and satisfies
$\lambda_0(\gamma,\mu)=\inf\{\text{Re}(\lambda):\lambda\in\sigma(L_{\gamma,\mu})\}$ \cite{P95}.
It is known that $\lambda\in\sigma(L_{\gamma,\mu})$
if and only if $\exp(-\lambda t)\in\sigma(T_t^{\gamma,\mu})$ \cite{Pazy83}.
Thus, since $||T^{\gamma,\mu}_t||<1$, it follows that $\lambda_0(\gamma,\mu)>0$.
We have
$$
\sup_{f\in C_0(\bar D),||f||\le1}||T_t^{\gamma,\mu}f||=\sup_{x\in D}P_x^{\gamma,\mu}(\tau_D>t);
$$
 thus, a standard result \cite{RS72} allows us to conclude that
$$
\lim_{t\to\infty}\frac1t\log \sup_{x\in D}P_x^{\gamma,\mu}(\tau_D>t)=-\lambda_0(\gamma,\mu).
$$
It is well known that this is equivalent to
\begin{equation}\label{largetime}
\lim_{t\to\infty}\frac1t\log P_x^{\gamma,\mu}(\tau_D>t)=-\lambda_0(\gamma,\mu),\ \ x\in D.
\end{equation}

The  Brownian motion with random jumps analyzed here is a paradigm for a phenomenon that occurs in various settings and which
is best illustrated perhaps in terms of computer-games or the game ``chutes and ladders.'' The object of the game is to reach the boundary of $D$
in as little time as possible (or alternatively, to avoid reaching the boundary for as much time as possible).
The game is played in rounds; however, time is always accumulating.
Various obstacles (modelled by the spatially dependent exponential clock with intensity $\gamma$) lead to the end of a round, and each new round
begins afresh from a new position which may be deterministic or random
(modelled by the measure $\mu$).
Then $\lambda_0(\gamma,\mu)$ is a measure of  the probability of long-term failure (or success, depending on the rules).
As $\gamma$ increases, the obstacles become more dense.
%When $\gamma=0$, there are no obstacles and thus $\lambda_0(0,\mu)=\lambda_0^D$, the principal eigenvalue for
%$-\frac12\Delta$ in $D$ with the Dirichlet boundary condition.

In \cite{P09}, the behavior of $\lambda_0(\gamma,\mu)$ was analyzed for the regimes $\gamma\ll1$ and $\gamma\gg1$
in the case of constant $V$. In this paper we consider the regime $\gamma\gg1$.
Note that probabilistic intuition suggests
the general direction of the result. Since $\gamma\gg1$,   the
Brownian motion  doesn't get very far before it jumps and gets
redistributed according to $\mu$. In particular then,  if
supp$(\mu)\subset D$,  it will be very difficult for the Brownian
motion to exit $D$, and in light of \eqref{largetime} one expects
that $\lim_{\gamma\to\infty}\lambda_0(\gamma,\mu)=0$. More
generally, one expects that the leading order asymptotic behavior for
large $\gamma$ will depend only on the behavior of $\mu$
arbitrarily close to the boundary.
For the case of constant $V$, in \cite{P09} it was shown that if $\mu$ is compactly supported in $D$, then there are constants $c_1,c_2$ such that
$\exp(-c_2\gamma^\frac12)\le\lambda_0(\gamma,\mu)\le \exp(-c_1\gamma^\frac12)$, for large $\gamma$.
Under the assumption that the measure $\mu$ possesses an appropriately smooth  density in a neighborhood of the boundary, which we will also call $\mu$,
it was proven in \cite{P09} that
\begin{equation}\label{P1}
\lim_{\gamma\to\infty}\gamma^{-\frac12}\lambda_0(\gamma,\mu)=\frac
1{\sqrt2|D|^\frac12}\int_{\partial D} \mu d\sigma.
\end{equation}
Assuming appropriate smoothness of the above density $\mu$, it was also proven there that if $\mu\equiv0$ on $\partial D$, then
\begin{equation}\label{P2}
\lim_{\gamma\to\infty}\lambda_0(\gamma,\mu)=\frac12\int_{\partial
D}(\nabla \mu\cdot n) d\sigma,
\end{equation}
while if $\mu,\nabla u\equiv0$ on $\partial D$, then
\begin{equation}\label{P3}
\lim_{\gamma\to\infty}\gamma^\frac12\lambda_0(\gamma,\mu)=\frac{|D|^\frac12}{2\sqrt2}\int_{\partial D}\Delta \mu d\sigma.
\end{equation}

The above results  show that in the case of constant $V$,
$\lambda_0(\gamma,\mu)$ grows on the order $\gamma^\frac12$ if the density $\mu$ of the jump measure does not vanish identically on $\partial D$,
while  for $k=1$ or 2, if all the derivatives of $\mu$ up to order $k-1$ vanish identically on the boundary, and at least one
of the  derivatives of order
$k$ does not vanish identically on $\partial D$, then $\lambda_0(\gamma,\mu)$
behaves asymptotically on the order $\gamma^\frac{1-k}2$. It is natural to expect that such behavior would continue for all positive integers $k$.

The case of variable $V$ is not at all a straight forward generalization of the constant case. To see why, consider first of all
what occurs if $V$ is allowed to be identically  0 in some sub-domain $A\subset D$. Then as long as the process remains in $A$, it never jumps;
consequently, starting at $x\in A$,  the probability of not exiting $D$ by time $t$ is greater than the probability of
a standard Brownian motion  not exiting $A$ by time $t$. In light of
\eqref{largetime}, this means that $\lambda_0(\gamma,\mu)\le\lambda_0^A$, where $\lambda_0^A$ is the principal eigenvalue for
$-\frac12\Delta$ in $A$ with the Dirichlet boundary condition. In particular, $\lambda_0(\gamma,\mu)$ is bounded and the behavior in
\eqref{P1} cannot occur.
 Now consider the case that $V$ is positive in $D$ but  decreases to 0
at $\partial D$. If this occurs at an appropriate rate, it should increase the tendency of the process
to leave the region, and thus raise the value of $\lambda_0(\gamma,\mu)$.
Indeed, in order for the process to exit the region, when the process is very near the boundary it needs to refrain from jumping.
Thus, in the case of variable $V$, the dependence on $V$ of the corresponding constant on the right hand side of \eqref{P1} should be consistent
with the above discussion.

In this paper, for variable, strictly positive $V$, we prove the analog of \eqref{P1} and the analog of a generalization of \eqref{P2}, \eqref{P3}
for the case that for some positive integer $k$, all the derivatives
of $\mu$ up to order $k-1$ vanish identically on $\partial D$.

\begin{theorem}\label{th}
Let $D\subset R^d$, $d\ge1$, be a bounded domain with a $C^{2,\alpha}$-boundary ($\alpha\in(0,1]$) and
let $\mu\in\mathcal P(D)$. Assume that $V>0$ on $\bar D$.
Let $\sigma$ denote Lebesgue measure on $\partial D$.
Let $D^\epsilon=\{x\in D: \text{dist}(x,\partial D)<\epsilon\}$.

 {\it\noindent i.}  Assume that for some
$\epsilon>0$, the restriction of $\mu$ to $D^\epsilon$ possesses a
density which belongs to $C^1(\bar D^\epsilon)$: $\mu(dx)|_{D^\epsilon}\equiv\mu(x)dx$.
Assume also that $V\in C^{2,\alpha}(\bar D)$. Then
\begin{equation}\label{1}
\lim_{\gamma\to\infty}\gamma^{-\frac12}\lambda_0(\gamma,\mu)=\frac{\int_{\partial D}\frac\mu{\sqrt {V}}d\sigma}{\sqrt2\int_D\frac1{ V}d\mu}.
\end{equation}

{\it\noindent ii.} Let $k\ge1$. Assume that
for some
$\epsilon>0$, the restriction of $\mu$ to $D^\epsilon$ possesses a
density which belongs to $C^{k+1}(\bar D^\epsilon)$: $\mu(dx)|_{D^\epsilon}\equiv\mu(x)dx$.
 Assume  also that $V\in C^{k+1}(\bar D)$ if $k$ is odd and that $V\in C^{k+1,\alpha}(\bar D)$
if $k$ is even.
Assume that
$$
\frac{d^\beta \mu}{d x^\beta}\equiv0 \ \text{on}\ \partial D,\ \text{ for all}\ |\beta|\le k-1.
$$
Let $n$ denote the inward unit normal to $D$ at $\partial D$.
\newline
If $k$ is odd, then
\begin{equation}\label{odd}
\lim_{\gamma\to\infty}\gamma^{\frac{k-1}2}\lambda_0(\gamma,\mu)=\frac{\int_{\partial D}
V^{-\frac{k+1}2}\nabla(\Delta^{\frac{k-1}2}\mu)\cdot nd\sigma}{2^{\frac{k+1}2}\int_D\frac1{ V}d\mu}.
\end{equation}
If $k$ is even, then
\begin{equation}\label{even}
\lim_{\gamma\to\infty}\gamma^{\frac{k-1}2}\lambda_0(\gamma,\mu)=\frac{\int_{\partial D}
V^{-\frac{k+1}2}\Delta^{\frac k2}\mu d\sigma}{2^{\frac{ k+1}2}\int_D\frac1{ V}d\mu}.
\end{equation}
\end{theorem}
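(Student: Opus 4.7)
The plan is to reduce the nonlocal eigenvalue problem to an exact transcendental equation involving two standard Dirichlet problems, and then extract the asymptotics by a boundary-layer expansion. The principal eigenfunction $u>0$ satisfies $-\frac12\Delta u+\gamma V(u-c)=\lambda u$ with $u|_{\partial D}=0$, where $c:=\int_D u\,d\mu$. Normalize so that $c=1$ and set $\tilde v:=1-u$; this converts the eigenvalue equation into the inhomogeneous Dirichlet problem
\[
\bigl(-\tfrac12\Delta+\gamma V-\lambda\bigr)\tilde v=-\lambda,\qquad \tilde v|_{\partial D}=1,
\]
together with the self-consistency condition $\int_D\tilde v\,d\mu=0$. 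Decompose $\tilde v=\hat v_{\gamma,\lambda}-\lambda\bar v_{\gamma,\lambda}$, where $\hat v_{\gamma,\lambda}$ solves the homogeneous equation with boundary value $1$, and $\bar v_{\gamma,\lambda}$ solves the equation with right side $1$ and zero boundary value. The self-consistency condition becomes the exact identity
\[
\lambda_0(\gamma,\mu)=\frac{\int_D \hat v_{\gamma,\lambda_0}\,d\mu}{\int_D \bar v_{\gamma,\lambda_0}\,d\mu}.
\]
Since a priori $\lambda_0\ll\gamma$ for large $\gamma$, the $\lambda$-perturbation inside these auxiliary problems is subleading: I would first work with the $\lambda=0$ versions $\hat v_\gamma,\bar v_\gamma$ and justify by monotonicity in $\lambda$ plus elliptic estimates that the correction is negligible.

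Next, I would compute the two integrals by boundary-layer analysis. In tubular coordinates $(y,s)$ with $y\in\partial D$ and $s=\mathrm{dist}(\cdot,\partial D)$, rescaling $s=\sigma/\sqrt{\gamma}$ reduces the leading-order equation for $\hat v_\gamma$ to $-\frac12\partial_\sigma^2\hat v+V(y)\hat v=0$ with $\hat v|_{\sigma=0}=1$ and $\hat v\to0$, hence $\hat v_\gamma(y,s)\sim e^{-\sqrt{2V(y)\gamma}\,s}$. For $\bar v_\gamma$ the interior ansatz $(\gamma V)^{-1}$ solves the equation modulo $O(\gamma^{-1})$, with an exponentially localized boundary correction that contributes only $O(\gamma^{-3/2})$ to $\int_D \bar v_\gamma\,d\mu$; hence $\int_D \bar v_\gamma\,d\mu\sim\gamma^{-1}\int_D V^{-1}\,d\mu$. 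Taylor-expanding $\mu(y,s)$ in $s$ to the appropriate order --- constant in $s$ for part (i), and $\tfrac{s^k}{k!}\partial_n^k\mu(y)$ in part (ii) by the vanishing hypothesis --- the Laplace integral
\[
\int_0^\infty s^k e^{-\sqrt{2V\gamma}\,s}\,ds=\frac{k!}{(2V\gamma)^{(k+1)/2}}
\]
produces the factor $\gamma^{-(k+1)/2}$ in the numerator; dividing by $\gamma^{-1}\int_D V^{-1}\,d\mu$ yields exactly the claimed asymptotic order $\gamma^{(1-k)/2}$ and the stated constants.

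The boundary operators in the theorem arise from the identifications $\partial_n^k\mu|_{\partial D}=\partial_n\Delta^{(k-1)/2}\mu|_{\partial D}$ (odd $k$) and $\Delta^{k/2}\mu|_{\partial D}$ (even $k$). These follow from the vanishing hypothesis: in local flat coordinates at a boundary point, $\Delta^j$ expands as a sum of terms of the form $\partial_{\mathrm{tang}}^\beta\partial_n^a\mu$ with $|\beta|+a=2j$, and for $|\beta|\ge 1$ the tangential factor acts on $\partial_n^a\mu|_{\partial D}$ with $a<k$, which vanishes by assumption; curvature corrections involve strictly lower normal-derivative orders and vanish for the same reason, leaving only the pure normal-derivative term. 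The main technical obstacle is the uniform error control for the boundary-layer expansion of $\hat v_\gamma$: the curvature of $\partial D$, the variation of $V$, and in part (ii) the near-boundary flatness of $\mu$ all force the expansion to be carried out to order $k$ in $s$, which is exactly what the regularity hypotheses $V\in C^{k+1}(\bar D)$ (or $C^{k+1,\alpha}$) and $\mu\in C^{k+1}(\bar D^\epsilon)$ make possible. Comparison-function arguments built from the leading exponential profile together with the maximum principle should give the required uniform error bounds and close the argument.
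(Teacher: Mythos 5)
Your reduction to the exact identity $\lambda_0(\gamma,\mu)=\frac{\int_D\hat v_{\gamma,\lambda_0}\,d\mu}{\int_D\bar v_{\gamma,\lambda_0}\,d\mu}$ is the same identity the paper works with (its Lemma 1, with $\hat v=u_{\lambda_0(\gamma,\mu),\gamma}$ and $\bar v=v_{\lambda_0(\gamma,\mu),\gamma}$), only derived by decomposing the eigenfunction rather than via Feynman--Kac, and your treatment of the denominator, $\gamma\int_D\bar v_{\gamma,\lambda_0}\,d\mu\to\int_D V^{-1}\,d\mu$, matches the paper's Lemma 3. Where you genuinely diverge is the numerator: the paper never expands $u_{\lambda_0(\gamma,\mu),\gamma}$ in the boundary layer. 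It integrates by parts repeatedly against the equation, so that for odd $k$ only bounded pointwise limits of coefficient functions are needed, while for even $k$ an auxiliary function $w_\gamma$ with $\Delta\bigl(w_\gamma/(\gamma V-\lambda_0)\bigr)=0$ pushes everything onto the single boundary quantity $\gamma^{-1/2}\nabla u_{\lambda_0(\gamma,\mu),\gamma}\cdot n\to-\sqrt{2V}$ (its Lemma 4, imported from the constant-$V$ case of \cite{P09} by localization and a monotonicity-in-$V$ comparison). Your identification of the boundary expressions is correct ($\Delta^{k/2}\mu$ and $\nabla(\Delta^{(k-1)/2}\mu)\cdot n$ do reduce to $\partial_n^k\mu$ on $\partial D$ under the vanishing hypothesis), and your Laplace computation reproduces the stated constants, so the plan is aimed at the right target.

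As written, though, there are two genuine gaps. First, the a priori bound $\lambda_0(\gamma,\mu)=o(\gamma)$, which you invoke as known, is itself a nontrivial step (the paper's Lemma 2); it is needed both to guarantee that the two auxiliary Dirichlet problems are solvable (one must know $\lambda_0$ stays below the bottom of the Dirichlet spectrum of $-\frac12\Delta+\gamma V$) and to justify discarding the $\lambda$-dependence and obtaining the denominator limit, so it cannot simply be assumed. Second, and more seriously, all of part (ii) rests on your unproved claim that the profile $e^{-\sqrt{2V(y)\gamma}\,s}$ approximates $u_{\lambda_0(\gamma,\mu),\gamma}$ with an error that survives multiplication by the weight $s^k$, i.e. $\int_0^\delta s^k\,|u-e^{-\sqrt{2V(y)\gamma}\,s}|\,ds=o(\gamma^{-(k+1)/2})$ uniformly in $y\in\partial D$. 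A two-sided comparison with $e^{-\sqrt{2(V\pm\epsilon)\gamma}\,s}$ could give this with constants off by $1+O(\epsilon)$ and then $\epsilon\to0$, but the comparison functions must be genuine super/subsolutions of the full operator in tubular coordinates, where applying $\Delta$ to the profile produces curvature terms of order $\sqrt\gamma$ and tangential terms of order $\gamma s^2|\nabla_y\sqrt V|^2$ and $\sqrt\gamma\,s$; none of this is carried out, and it is precisely the analytic content that the paper either outsources to \cite{P09} or circumvents by its integration-by-parts scheme. Until that uniform boundary-layer estimate is established (which would amount to proving a quantitative strengthening of the paper's Lemma 4), your argument is a plausible plan rather than a proof.
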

\noindent \bf Remark.\rm\
As  $V$ decreases to 0 on some sub-domain $A\subset\subset D$
(and increases elsewhere in order to maintain the normalization $\int_DVdx=1$), assuming that supp$(\mu)\cap A\neq\emptyset$,  the constant on the right hand side
of \eqref{1} converges to 0, which is consistent with the discussion in the penultimate paragraph before Theorem \ref{th}.
(If on the other hand $A\cap$ supp$(\mu)=\emptyset$, then as $V$ decreases to 0 on $A$,  the constant
on the right hand side of \eqref{1} remains bounded away from 0. This is not inconsistent with the above-mentioned discussion; it shows
that the asymptotic behavior as $\gamma\to\infty$ is not uniform over $V$.)

Assuming that $\mu\not\equiv 0$ on $\partial D$,  if $V$ is of the form $\epsilon+(1-\epsilon|D|)\hat V$, where $\hat V$ is a smooth function which is strictly positive in $D$ and vanishes on $\partial D$,
then as $\epsilon\to0$, the right hand side of \eqref{1} converges to $\infty$. This is consistent with
the discussion in the penultimate paragraph before Theorem \ref{1}. It also suggests that for a smooth $V$
which is strictly positive in $D$ and vanishes on $\partial D$, $\lambda_0(\gamma,\mu)$ will grow on a larger order than $\gamma^\frac12$.
However, we cannot prove this, and it seems  conceivable to us that in fact the order of growth is smaller than $\gamma^\frac12$---see section 4.

We will also prove the following result in the case that $\mu$ is compactly supported.

\begin{proposition}\label{decay}
Let $\mu$ be compactly supported in $D$. Then there exist constants $c_1,c_2$ such that
\begin{equation}
\exp(-c_2\gamma^\frac12)\le\lambda_0(\gamma,\mu)\le \exp(-c_1\gamma^\frac12), \ \text{for}\ \gamma>1.
\end{equation}
\end{proposition}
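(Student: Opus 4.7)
The plan is to extract both bounds from the probabilistic identity \eqref{largetime} via standard estimates on the Laplace transform of the Brownian exit time. Set $K=\mathrm{supp}(\mu)\Subset D$, $d_0=\mathrm{dist}(K,\partial D)>0$, $R=\mathrm{diam}(D)$, $V_{\min}=\min_{\bar D}V$, $V_{\max}=\max_{\bar D}V$. The two analytic inputs are
\begin{equation*}
E_y\bigl[e^{-\lambda\tau_{B(y,d_0)}^{BM}}\bigr]\le C\lambda^{1/4}e^{-c_0\sqrt\lambda},\qquad E_x\bigl[e^{-\lambda\tau_D^{BM}}\bigr]\ge e^{-R\sqrt{2\lambda}}\quad(x\in D);
\end{equation*}
the first will follow by integrating the Gaussian tail bound $P_y(\tau_{B(y,d_0)}^{BM}\le s)\le Ce^{-d_0^2/(2ds)}$ against $\lambda e^{-\lambda s}\,ds$ and applying a saddle-point computation, while the second will follow by enclosing $D$ in a slab of width $R$, projecting onto the perpendicular coordinate, and using $1/\cosh(x)\ge e^{-x}$ in the explicit 1D Laplace transform formula for Brownian exit from an interval.

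For the upper bound $\lambda_0(\gamma,\mu)\le e^{-c_1\sqrt\gamma}$ the strategy is to fix $x_0\in K$ and bound the survival probability from below. For any $y\in K$, letting $T_1$ denote the first jump time and conditioning on the Brownian path,
\begin{equation*}
P_y^{\gamma,\mu}(\tau_D^{BM}<T_1)=E_y\bigl[e^{-\int_0^{\tau_D^{BM}}\gamma V(X_s)\,ds}\bigr]\le E_y\bigl[e^{-\gamma V_{\min}\tau_{B(y,d_0)}^{BM}}\bigr]\le q_\gamma:=Ce^{-c\sqrt\gamma}.
\end{equation*}
Since every jump relocates the process to a $\mu$-distributed point of $K$, the strong Markov property gives that the probability of the first $n$ inter-jump excursions all remaining in $D$ is at least $(1-q_\gamma)^n$. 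Each inter-jump length stochastically dominates $\mathrm{Exp}(\gamma V_{\max})$, so by the Chernoff bound for sums of exponentials $P(T_n<t)\le e^{-c\gamma t}$ when $n=2\lceil\gamma V_{\max}t\rceil$. Combining yields
\begin{equation*}
P_{x_0}^{\gamma,\mu}(\tau_D>t)\ge (1-q_\gamma)^{2\lceil\gamma V_{\max}t\rceil}-e^{-c\gamma t}\ge \tfrac12\exp\bigl(-C'\gamma V_{\max}\,t\,e^{-c\sqrt\gamma}\bigr)
\end{equation*}
for $\gamma$ large, so \eqref{largetime} gives $\lambda_0(\gamma,\mu)\le C'\gamma V_{\max} e^{-c\sqrt\gamma}\le e^{-c_1\sqrt\gamma}$ after absorbing the polynomial $\gamma$ into the exponential.

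For the lower bound $\lambda_0(\gamma,\mu)\ge e^{-c_2\sqrt\gamma}$ the idea is that from any $x\in D$ the process can exit $D$ in unit time simply by not jumping while its Brownian component reaches $\partial D$:
\begin{equation*}
P_x^{\gamma,\mu}(\tau_D\le 1)\ge E_x\bigl[e^{-\gamma V_{\max}\tau_D^{BM}};\,\tau_D^{BM}\le 1\bigr]\ge e^{-R\sqrt{2\gamma V_{\max}}}-e^{-\gamma V_{\max}}.
\end{equation*}
For $\gamma$ large the second term is negligible against the first, so $p_0:=\inf_{x\in D}P_x^{\gamma,\mu}(\tau_D\le 1)\ge \tfrac12 e^{-R\sqrt{2\gamma V_{\max}}}$. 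Iterating the strong Markov property at integer times then gives $\sup_x P_x^{\gamma,\mu}(\tau_D>n)\le(1-p_0)^n$, whence $\lambda_0(\gamma,\mu)\ge -\log(1-p_0)\ge p_0\ge e^{-c_2\sqrt\gamma}$ for $\gamma>1$.

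The main work lies in the two Laplace-transform estimates in arbitrary dimension; once those are in place, everything else is a routine application of the strong Markov property and the Chernoff inequality for sums of exponentials. The argument closely parallels the constant-$V$ analysis in~\cite{P09}, and the modifications needed for variable $V$ amount essentially to the substitution of $\gamma V_{\min}$ or $\gamma V_{\max}$ in the relevant exponents.
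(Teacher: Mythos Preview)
Your argument is correct and the underlying analytic input is the same as the paper's---both rely on the large-$\lambda$ estimates for the Laplace transform $E_x\exp(-\lambda\tau_D^{BM})$ of the Brownian exit time (the paper quotes these as \cite[equations (3.3) and (3.6)]{P09}). The route, however, is different. The paper never touches survival probabilities directly; instead it uses the fixed-point characterization of Lemma~1, namely that $\lambda_0(\gamma,\mu)$ is the smallest positive root of $H_\gamma(\lambda)=\lambda$ where
\[
H_\gamma(\lambda)=\frac{E_\mu\exp\bigl(\int_0^\tau(\lambda-\gamma V(X(s)))\,ds\bigr)}{E_\mu\int_0^\tau\exp\bigl(\int_0^t(\lambda-\gamma V(X(s)))\,ds\bigr)\,dt},
\]
and sandwiches $H_\gamma$ between explicit functions $H_\gamma^-\le H_\gamma\le H_\gamma^+$ whose fixed points lie in $[\exp(-c_2\gamma^{1/2}),\exp(-c_1\gamma^{1/2})]$. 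Your approach bypasses Lemma~1 entirely and works directly from \eqref{largetime}, chaining inter-jump excursions via the strong Markov property and a Chernoff bound. This is more elementary and arguably more transparent probabilistically, at the cost of the bookkeeping around the event $\{T_n\ge t\}$; the paper's approach is shorter once Lemma~1 is in hand and integrates more naturally with the rest of the analysis, which also proceeds through the representation \eqref{lambda2}. One small remark: the exponent $\lambda^{1/4}$ in your first displayed Laplace-transform bound is not quite what the saddle-point computation gives, but since any polynomial prefactor is absorbed into the exponential this is immaterial.
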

In section 2 we present some preliminary results needed for the proof of Theorem \ref{th}, and
we conclude that section with the proof of Proposition \ref{decay}. Theorem \ref{th} is proved in section 3.
In section 4 we discuss an open problem concerning the behavior of $\lambda_0(\gamma,\mu)$ in the case
that $V$ is positive in $D$ but vanishes on the boundary.
\section{Preliminary Results and Proof of Proposition \ref{decay}}
In this section we prove a number of preliminary results, culminating in the proof of Proposition
\ref{decay}. Let $P_x$ and $E_x$ denote respectively probabilities and  expectations
for Brownian motion starting from $x$.

Let  $u_{\lambda,\gamma}$ and $v_{\lambda,\gamma}$ denote the  solutions to the equations
\begin{equation}\label{ue}
\begin{aligned}
\begin{cases}
&\frac12\Delta u_{\lambda,\gamma}+(\lambda-\gamma V(x))u_{\lambda,\gamma}=0\ \ \text{in}\ D;\\
&u_{\lambda,\gamma}=1\ \text{on}\ \partial D;
\end{cases}
\end{aligned}
\end{equation}

\begin{equation}\label{ve}
\begin{aligned}
\begin{cases}
&\frac12\Delta v_{\lambda,\gamma}+(\lambda-\gamma V(x))v_{\lambda,\gamma}=-1\ \ \text{in}\ D;\\
&v_{\lambda,\gamma}=0\ \text{on}\ \partial D.
\end{cases}
\end{aligned}
\end{equation}

\bigskip

\begin{lemma}
The principal eigenvalue $\lambda_0(\gamma,\mu)$  is the smallest positive solution $\lambda$ to the equation
\begin{equation}\label{lambda}
\lambda=\frac{\int_Du_{\lambda,\gamma}d\mu}{\int_Dv_{\lambda,\gamma}d\mu}.
\end{equation}
In particular
\begin{equation}\label{lambda2}
\lambda_0(\gamma,\mu)=\frac{\int_Du_{\lambda_0(\gamma,\mu),\gamma}d\mu}{\int_Dv_{\lambda_0(\gamma,\mu),\gamma}d\mu}.
\end{equation}
\end{lemma}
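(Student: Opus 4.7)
The plan is to recast the eigenvalue equation $L_{\gamma,\mu}\phi=\lambda\phi$, $\phi|_{\partial D}=0$, as a Poisson-type PDE whose right-hand side is a constant (times $V$) and then exhibit an explicit particular solution in terms of $u_{\lambda,\gamma}$ and $v_{\lambda,\gamma}$. Concretely, if $\phi$ is an eigenfunction and $c:=\int_D\phi\,d\mu$, the equation reads
\begin{equation*}
\tfrac12\Delta\phi+(\lambda-\gamma V)\phi=-\gamma V c\quad\text{in }D,\qquad \phi|_{\partial D}=0.
\end{equation*}
Since $\tfrac12\Delta(1)+(\lambda-\gamma V)(1)=\lambda-\gamma V$ and $\lambda v_{\lambda,\gamma}$ contributes $-\lambda$ on the right, a direct computation shows
\begin{equation*}
\tilde\psi_{\lambda,\gamma}:=1-u_{\lambda,\gamma}+\lambda v_{\lambda,\gamma}
\end{equation*}
satisfies exactly $\tfrac12\Delta\tilde\psi_{\lambda,\gamma}+(\lambda-\gamma V)\tilde\psi_{\lambda,\gamma}=-\gamma V$ in $D$ with $\tilde\psi_{\lambda,\gamma}=0$ on $\partial D$.

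Next, for $\lambda$ strictly below $\Lambda$, the principal Dirichlet eigenvalue of $-\tfrac12\Delta+\gamma V$, both $u_{\lambda,\gamma}$ and $v_{\lambda,\gamma}$ are uniquely determined and the inhomogeneous Dirichlet problem above has a unique solution. Hence any eigenfunction must be $\phi=c\,\tilde\psi_{\lambda,\gamma}$, and substituting this back into $c=\int_D\phi\,d\mu$ yields either $c=0$ (corresponding to eigenfunctions of the pure Dirichlet Schr\"odinger operator $-\tfrac12\Delta+\gamma V$, which are not what we want) or the consistency identity $\int_D\tilde\psi_{\lambda,\gamma}\,d\mu=1$. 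Expanding this gives
\begin{equation*}
\lambda\int_D v_{\lambda,\gamma}\,d\mu=\int_D u_{\lambda,\gamma}\,d\mu,
\end{equation*}
which is exactly equation \eqref{lambda}. Conversely, whenever $\lambda\in(0,\Lambda)$ solves \eqref{lambda}, $\tilde\psi_{\lambda,\gamma}$ is a nonzero eigenfunction of $L_{\gamma,\mu}$ with eigenvalue $\lambda$, so $\lambda\in\sigma(L_{\gamma,\mu})$ and therefore $\lambda\geq\lambda_0(\gamma,\mu)$.

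Finally, to see that $\lambda_0(\gamma,\mu)$ itself solves \eqref{lambda}, I would invoke Krein--Rutman: the eigenfunction $\phi_0$ associated with $\lambda_0(\gamma,\mu)$ is strictly positive in $D$, so $c=\int_D\phi_0\,d\mu>0$, ruling out the trivial case and forcing \eqref{lambda} to hold at $\lambda=\lambda_0(\gamma,\mu)$. Combined with the previous paragraph, $\lambda_0(\gamma,\mu)$ is the smallest positive solution.

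The main technical point to verify is that $u_{\lambda,\gamma}$ and $v_{\lambda,\gamma}$ are well defined throughout the range $(0,\lambda_0(\gamma,\mu)]$, i.e., that $\lambda_0(\gamma,\mu)\le\Lambda$. This follows from a simple comparison: if $\Phi>0$ is the Dirichlet principal eigenfunction of $-\tfrac12\Delta+\gamma V$ with eigenvalue $\Lambda$, then
\begin{equation*}
L_{\gamma,\mu}\Phi=\Lambda\Phi-\gamma V\!\int_D\Phi\,d\mu\le\Lambda\Phi,
\end{equation*}
and the standard supersolution characterization of the principal eigenvalue (available since $L_{\gamma,\mu}$ generates a positive compact semigroup, as quoted in the introduction) gives $\lambda_0(\gamma,\mu)\le\Lambda$. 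This is really the only spot where care is needed; the rest is a bookkeeping exercise in linear algebra on the three-dimensional affine space spanned by $1$, $u_{\lambda,\gamma}$ and $v_{\lambda,\gamma}$.
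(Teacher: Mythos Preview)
Your argument is correct and takes a genuinely different route from the paper's proof. The paper proceeds probabilistically: it writes the principal eigenfunction $w_\gamma$ via the Feynman--Kac formula, integrates by parts in \emph{time} inside the expectation, and then identifies the resulting pieces as the stochastic representations $u_{\lambda,\gamma}(x)=E_x\exp(\int_0^\tau(\lambda-\gamma V)ds)$ and $v_{\lambda,\gamma}(x)=E_x\int_0^\tau\exp(\int_0^t(\lambda-\gamma V)ds)\,dt$. Your approach, by contrast, is purely PDE-theoretic: you observe directly that $\tilde\psi_{\lambda,\gamma}=1-u_{\lambda,\gamma}+\lambda v_{\lambda,\gamma}$ solves the same inhomogeneous Dirichlet problem as the eigenfunction, and then use uniqueness below the Schr\"odinger principal eigenvalue $\Lambda$ to force the consistency condition. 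This is cleaner and avoids any probability, though the paper's route has the side benefit of producing the Feynman--Kac formulas for $u_{\lambda,\gamma}$ and $v_{\lambda,\gamma}$, which are used repeatedly later (Lemma~\ref{sublin}, the proof of Proposition~\ref{decay}, and inequality~\eqref{hittingtime}).

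One small point: you need $\lambda_0(\gamma,\mu)<\Lambda$ strictly, not merely $\le$, for the Dirichlet problems defining $u_{\lambda,\gamma},v_{\lambda,\gamma}$ to be uniquely solvable at $\lambda=\lambda_0(\gamma,\mu)$. Your supersolution argument as stated gives only $\le$. The strict inequality follows immediately from Fredholm: if $\lambda_0=\Lambda$, the positive principal eigenfunction $\phi_0$ would satisfy $(-\tfrac12\Delta+\gamma V-\Lambda)\phi_0=\gamma V\!\int_D\phi_0\,d\mu$, but the right side has nonzero inner product with the kernel element $\Phi>0$, a contradiction. The paper's proof is equally casual about this issue (it simply writes down the Feynman--Kac expectations without checking finiteness), so you are in good company.
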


\begin{proof}

Let $w_{\gamma}$ denote the eigenfunction corresponding to the principal eigenvalue of $L_{\gamma,\mu}$, normalized by $\int_Dw_\gamma d\mu=1$. Then $w_\gamma$ satisfies
\begin{equation*}
\begin{aligned}
\begin{cases}
&\frac12\Delta w_\gamma-\gamma V(x)w_\gamma+\gamma V(x)=-\lambda_0(\gamma,\mu)w_\gamma;\\
&w_\gamma|_{\partial D}=0;\\
&\int_D w_\gamma d\mu=1.\\
\end{cases}
\end{aligned}
\end{equation*}

From the Feynman-Kac formula, one has
\begin{equation*}
w_\gamma(x)=E_x\int^\tau_0\gamma
V(X(t))\exp({\int^t_0(\lambda_0(\gamma,\mu)-\gamma V(X(s)))ds})dt,
\end{equation*}
and then   the  normalization  condition gives
\begin{equation}\label{int}
1=E_\mu\int^\tau_0\gamma V(X(t))\exp({\int^t_0(\lambda_0(\gamma,\mu)-\gamma V(X(s)))ds})dt.
\end{equation}
Integrating by parts gives
\begin{equation*}
\begin{aligned}
&\int^\tau_0\gamma V(X(t))\exp(\int^t_0(\lambda_0(\gamma,\mu)-\gamma V(X(s)))ds)dt=\\
&\int^\tau_0\exp(\lambda_0(\gamma,\mu)t)\frac{d}{dt}[-\exp(-\int^t_0\gamma V(X(s))ds)]dt=\\
&-\exp(\int^\tau_0(\lambda_0(\gamma,\mu)-\gamma V(X(s)))ds)+1\\
&+\lambda_0(\gamma,\mu)\int^\tau_0\exp(\int^t_0(\lambda_0(\gamma,\mu)-\gamma V(X(s)))ds)dt.\\
\end{aligned}
\end{equation*}
Substituting this in \eqref{int}, we obtain
\begin{equation}\label{eq}
\lambda_0(\gamma,\mu)=\frac{E_\mu \exp(\int^\tau_0(\lambda_0(\gamma,\mu)-\gamma V(X(s)))ds)}{
E_\mu \int^\tau_0\exp(\int^t_0(\lambda_0(\gamma,\mu)-\gamma V(X(s)))ds)dt}.
\end{equation}
By the Feynman-Kac formula again, we have that
\begin{equation}\label{us}
u_{\lambda_0(\gamma,\mu),\gamma}(x)=E_x\exp(\int^\tau_0(\lambda_0(\gamma,\mu)-\gamma V(X(s)))ds),
\end{equation}
and
\begin{equation}\label{vs} v_{\lambda_0(\gamma,\mu),\gamma}(x)=E_x\int^\tau_0\exp(\int^t_0(\lambda_0(\gamma,\mu)-\gamma V(X(s)))ds)dt.
\end{equation}
Thus, by \eqref{eq}, $\lambda_0(\gamma,\mu)$ is a positive solution to \eqref{lambda}.

Conversely, working backwards, if $\lambda$ is a solution to \eqref{lambda} (or equivalently, of \eqref{eq} with $\lambda_0(\gamma,\mu)$
replaced by $\lambda$), then it is an eigenvalue.
\end{proof}

The following lemma plays a crucial role in both the  proof of Proposition 1 and the proof of Theorem 1.

\begin{lemma}\label{sublin}

The principal eigenvalue satisfies
\begin{equation}\label{lim}
\lim_{\gamma\rightarrow\infty}\frac{\lambda_0(\gamma,\mu)}{\gamma}=0.
\end{equation}

\end{lemma}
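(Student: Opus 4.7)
My plan is to use the characterization from Lemma 1 that $\lambda_0(\gamma,\mu)$ is the smallest positive fixed point of
\[
\psi_\gamma(\lambda):=\frac{\int_D u_{\lambda,\gamma}\,d\mu}{\int_D v_{\lambda,\gamma}\,d\mu}.
\]
Writing $V_{\min}=\min_{\bar D}V>0$ and $V_{\max}=\max_{\bar D}V$, one has $\psi_\gamma(0)>0$ (both $u_0,v_0>0$ in $D$ by the maximum principle), and $\psi_\gamma$ is continuous on $[0,\gamma V_{\min})$, where the zeroth-order coefficient $\lambda-\gamma V$ of \eqref{ue}, \eqref{ve} stays non-positive and classical Schauder theory applies. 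Fix $\epsilon\in(0,V_{\min})$. If I can show $\psi_\gamma(\epsilon\gamma)\le\epsilon\gamma$ for all sufficiently large $\gamma$, the intermediate value theorem then produces a fixed point of $\psi_\gamma$ in $(0,\epsilon\gamma]$, forcing $\lambda_0(\gamma,\mu)\le\epsilon\gamma$; letting $\epsilon\downarrow0$ yields \eqref{lim}.

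To establish this inequality I use the Feynman--Kac representations (valid for $\lambda=\epsilon\gamma<\gamma V_{\min}$ by the same argument as in the proof of Lemma 1), together with the classical Laplace-transform bound $E_0\exp(-\alpha\tau_{B_\delta})\le C\exp(-c\delta\sqrt{\alpha})$ for Brownian exit from a ball of radius $\delta$. For $x$ with $\text{dist}(x,\partial D)\ge\delta$, so that $\tau_D\ge\tau_{B_\delta(x)}$,
\[
u_{\epsilon\gamma,\gamma}(x)=E_x\exp\bigl(-\gamma\textstyle\int_0^{\tau_D}(V-\epsilon)\,ds\bigr)\le E_x\exp\bigl(-\gamma(V_{\min}-\epsilon)\tau_{B_\delta(x)}\bigr)\le Ce^{-c\delta\sqrt{\gamma}},
\]
while, using $V\le V_{\max}$,
\[
v_{\epsilon\gamma,\gamma}(x)\ge E_x\int_0^{\tau_D}e^{-\gamma V_{\max}t}\,dt=\frac{1-E_xe^{-\gamma V_{\max}\tau_D}}{\gamma V_{\max}}\ge\frac{1}{2\gamma V_{\max}},
\]
the last step holding once $\gamma$ is large enough (relative to $\delta$). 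On the near-boundary set $D^\delta=\{\text{dist}(\cdot,\partial D)<\delta\}$ I use only the trivial bound $u_{\epsilon\gamma,\gamma}\le1$ furnished by the maximum principle.

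Integrating against $\mu$ and combining,
\[
\psi_\gamma(\epsilon\gamma)\le\frac{2\gamma V_{\max}\bigl(\mu(D^\delta)+Ce^{-c\delta\sqrt{\gamma}}\bigr)}{1-\mu(D^\delta)}.
\]
Since $\mu$ is a probability measure on the open set $D$, tightness gives $\mu(D^\delta)\to0$ as $\delta\to0$. I therefore first pick $\delta$ small enough that $\mu(D^\delta)\le\epsilon/(8V_{\max})$, and then $\gamma$ large enough that $Ce^{-c\delta\sqrt{\gamma}}\le\epsilon/(8V_{\max})$; the right-hand side is then at most $\epsilon\gamma$, as required.

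The main thing to verify carefully is the legitimacy of the Feynman--Kac representation and the continuity of $\psi_\gamma$ on the whole interval $[0,\gamma V_{\min})$; both reduce to routine applications of the maximum principle in the regime where the zeroth-order coefficient is non-positive, and the lower bound $v_{\lambda,\gamma}\ge1/(2\gamma V_{\max})$ on interior points keeps the denominator bounded away from zero. The quantitative engine is the $e^{-c\delta\sqrt{\gamma}}$ decay of the Brownian exit-time Laplace transform, which comfortably beats the apparent factor $\gamma$ in the ratio and drives $\psi_\gamma(\epsilon\gamma)/\gamma$ below $\epsilon$.
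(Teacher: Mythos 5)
Your proof is correct, and while it rests on the same foundation as the paper's --- Lemma 1's characterization of $\lambda_0(\gamma,\mu)$ as the smallest positive root of $\lambda=\int_D u_{\lambda,\gamma}\,d\mu\big/\int_D v_{\lambda,\gamma}\,d\mu$, tested at a value of $\lambda$ proportional to $\gamma$ with constant below $\min V$ --- the implementation is genuinely different. The paper argues by contradiction through accumulation points of $\lambda_0(\gamma,\mu)/\gamma$: it must first exclude the value $p=\infty$ (via blow-up of the Feynman--Kac representation of $u$), and then, for $0<p<\infty$, it uses the fact that $\lambda$ is smaller than the right-hand side of \eqref{lambdaintermsofE} for $\lambda<\lambda_0(\gamma,\mu)$, plugs in $\lambda=q\gamma_n$ with $q<\min(p,\min V)$, and kills the resulting ratio by bounded convergence applied to $E_\mu\exp(-(\min V-q)\gamma_n\tau)$; no boundary-layer decomposition is needed there because $\tau>0$ holds $P_\mu$-almost surely. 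Your direct route --- showing $\psi_\gamma(\epsilon\gamma)\le\epsilon\gamma$ and invoking the intermediate value theorem --- avoids the accumulation-point bookkeeping and the $p=\infty$ case altogether, at the cost of splitting $D$ into $D^\delta$ and its complement and importing the exit-time bound $E_0\exp(-\alpha\tau_{B_\delta})\le C\exp(-c\delta\sqrt{\alpha})$; note this quantitative bound is more than you need, since $E_x\exp(-\alpha\tau_{B_\delta(x)})$ does not depend on $x$, so dominated convergence at fixed $\delta$ already makes the numerator small on $D\setminus D^\delta$ and gives the denominator bound $(1-\mu(D^\delta))/(2\gamma\max V)$. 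Both arguments ultimately exploit the same fact that $\mu$ puts no mass on $\partial D$ --- yours through $\mu(D^\delta)\to0$ as $\delta\to0$, the paper's through $\tau>0$ a.s.\ under $P_\mu$. One point worth making explicit in a final write-up: having produced a root of $\psi_\gamma(\lambda)=\lambda$ in $(0,\epsilon\gamma]$, you conclude $\lambda_0(\gamma,\mu)\le\epsilon\gamma$ from the minimality in Lemma 1 (equivalently, from its converse direction, which makes any such root an eigenvalue); and since you only evaluate $\psi_\gamma$ on $[0,\epsilon\gamma]\subset[0,\gamma\min V)$, where the zeroth-order coefficient in \eqref{ue} and \eqref{ve} is non-positive, the solvability of these problems, the Feynman--Kac formulas, the bound $u_{\epsilon\gamma,\gamma}\le1$, and the continuity of $\psi_\gamma$ are all unproblematic, exactly as you indicate.
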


\begin{proof}

Using \eqref{int} we have
\begin{equation*}
\frac1{\gamma\max V}\leq
E_\mu\int^\tau_0\exp(\int^t_0(\lambda_0(\gamma,\mu)-\gamma
V(X(s)))ds)dt \leq\frac1{\gamma\min V},
\end{equation*}
or equivalently from (\ref{vs})
\begin{equation*}
\frac1{\gamma\max V}\leq\int_Dv_{\lambda_0(\gamma,\mu),\gamma} d\mu\leq\frac1{\gamma\min V}.
\end{equation*}
Multiplying above by $\lambda_0(\gamma,\mu)$, (\ref{lambda2}) gives
\begin{equation}\label{uineq}
\frac{\lambda_0(\gamma,\mu)}{\gamma\max V}\leq\int_Du_{\lambda_0(\gamma,\mu),\gamma}d\mu
\leq\frac{\lambda_0(\gamma,\mu)}{\gamma\min V}.
\end{equation}

Let $p$ be an accumulation point of $\frac{\lambda_0(\gamma,\mu)}{\gamma}$ as $\gamma\to\infty$.
We note that if $p=\infty$, then for certain large $\gamma$
we would have $\int^\tau_0(\lambda_0(\gamma,\mu)-\gamma V(X(s)))ds>\lambda^D_0\tau$,
where $\lambda^D_0$ is the principal eigenvalue for the operator $-\frac12\Delta$ in $D$ with the Dirichlet boundary condition.
Using the representation in (\ref{us})
and \cite[chapter 3]{P95}, this would give $u_{\lambda_0(\gamma,\mu),\gamma}\equiv\infty$,
contradicting (\ref{uineq}).

Now assume that $0<p<\infty$. Let $\{\gamma_n\}$ be such that $p=\lim_{n\to\infty}\frac{\lambda_0(\gamma_n,\mu)}{\gamma_n}$.
As was shown in Lemma 1,  $\lambda_0(\gamma,\mu)$ is the smallest positive solution of the equation
\begin{equation}\label{lambdaintermsofE}
\lambda=\frac{E_\mu\exp(\int^\tau_0(\lambda-\gamma V(X(s)))ds)}
{E_\mu\int^\tau_0\exp(\int^t_0(\lambda-\gamma V(X(s)))ds)dt}.
\end{equation}
Both sides of \eqref{lambdaintermsofE} are continuous in $\lambda$, and since the left hand side is zero at $\lambda=0$
and the right hand side is positive at $\lambda=0$, it
follows that  for $\lambda<\lambda_0(\gamma,\mu)$ the left hand side is smaller the the right hand side.
 Fix $q\in(0,\min(p,\min V))$.
 Since $q\gamma_n\leq\lambda_0(\gamma_n)$ for sufficiently large $n$,  we  have
\begin{equation*}
\begin{aligned}
&q\gamma_n
\leq\frac{E_\mu\exp(\int^\tau_0(q\gamma_n-\gamma_n V(X(s)))ds)}
{E_\mu\int^\tau_0\exp(\int^t_0(q\gamma_n-\gamma_n V(X(s)))ds)dt}\le\\
&\frac{E_\mu\exp(-(\min V-q)\gamma_n\tau)}
{E_\mu\int^\tau_0\exp(-(\max V-q)\gamma_nt)dt}
=\frac{\gamma_n(\max V-q)
E_\mu\exp(-(\min V-q)\gamma_n\tau)}
{1-E_\mu\exp(-(\max V-q)\gamma_n\tau)}.\\
\end{aligned}
\end{equation*}

\noindent Or equivalently,
\begin{equation}\label{contradiction}
\frac q{(\max V-q)}\leq
\frac{E_\mu\exp(-(\min V-q)\gamma_n\tau)}
{1-E_\mu\exp(-(\max V-q)\gamma_n\tau)}.
\end{equation}
Letting $n\to\infty$, the right hand side of (\ref{contradiction}) goes to zero by the bounded convergence theorem, and this is a contradiction.
We have now shown that there are no accumulation points $p\in(0,\infty]$, which proves \eqref{lim}.
\end{proof}

\bigskip

We  now  prove Proposition \ref{decay}.

\it\noindent Proof of Proposition \ref{decay}.\rm\
Define
\begin{equation*}
H_\gamma(\lambda)=\frac{E_\mu\exp(\int^\tau_0(\lambda-\gamma V(X(s)))ds)}
{E_\mu\int^\tau_0\exp(\int^t_0(\lambda-\gamma V(X(s)))ds)dt}.
\end{equation*}
By Lemma 1, $\lambda_0(\gamma,\mu)$ is the smallest positive solution to the equation $H_\gamma(\lambda)=\lambda$. We now  show
 that the smallest positive solution to $H_\gamma(\lambda)=\lambda$ satisfies the upper bound in Proposition \ref{decay}.
 Let $H_\gamma^+(\lambda)$ satisfy
\begin{equation*}
H_\gamma(\lambda)\leq H_\gamma^+(\lambda),\ \forall\lambda>0,
\end{equation*}
and let $\lambda^+(\gamma)$ be the smallest positive solution to $H_\gamma^+(\lambda)=\lambda$. If we define
\begin{equation*}
G_\gamma(\lambda)=H_\gamma(\lambda)-\lambda,
\end{equation*}
then from the definition of $H_\gamma(\lambda)$, one has
$G_\gamma(0)>0$
and
\begin{equation*}
G_\gamma(\lambda^+(\gamma))=H_\gamma(\lambda^+(\gamma))-\lambda^+(\gamma)\leq H_\gamma^+(\lambda^+(\gamma))-\lambda^+(\gamma)=0.
\end{equation*}
Since $\lambda_0(\gamma,\mu)$ is the first positive zero of $G_\gamma$, it follows that $\lambda_0(\gamma,\mu)\leq\lambda^+(\gamma)$.
Thus, it suffices to show that $\lambda^+(\gamma)$  (with an appropriate choice of $H_\gamma^+$).
satisfies the upper bound in Proposition \ref{decay}. Note for use below that the above argument does not even require that $H^+_\gamma$ be continuous
or monotone, just
that there be a smallest positive root to the equation $H^+_\gamma(\lambda)=\lambda$.

To find an appropriate $H_\gamma^+$, we write
\begin{equation*}
\begin{aligned}
H_\gamma(\lambda)&=\frac{E_\mu\exp(\int^\tau_0(\lambda-\gamma V(X(s)))ds)}
{E_\mu\int^\tau_0\exp(\int^t_0(\lambda-\gamma V(X(s)))ds)dt}
\leq\frac{E_\mu \exp((\lambda-\gamma\min V)\tau)}
{E_\mu\int^\tau_0\exp((\lambda-\gamma\max V)t)dt}\\
&=\frac{(\gamma\max V-\lambda)E_\mu \exp(\gamma\min V(\frac{\lambda}{\gamma\min V}-1)\tau)}
{1-E_\mu\exp(\gamma\max V(\frac{\lambda}{\gamma\max V}-1)\tau)}\\
&\leq\gamma\max V\frac{E_\mu \exp(\gamma\min V(\frac{\lambda}{\gamma\min V}-1)\tau)}
{1-E_\mu \exp(\gamma\max V(\frac{\lambda}{\gamma\max V}-1)\tau)}.\\
\end{aligned}
\end{equation*}
By the bounded convergence theorem, $E_\mu \exp(\gamma\max V(\frac{\lambda}{\gamma\max V}-1)\tau)\le\frac12$, for $\lambda\le1$ and $\gamma$ sufficiently large.
Also, since $\mu$ is compactly supported, there exists a $c_0>0$ such that
$E_\mu \exp(\gamma\min V(\frac{\lambda}{\gamma\min V}-1)\tau)\le \exp(-c_0\gamma^\frac12)$, for
$\lambda\le1$ and $\gamma$ sufficiently large (see \cite[equation (3.3)]{P09}).
Thus, there exists a $c_1>0$ such that $H_\gamma(\lambda)\le\exp(-c_1\gamma^\frac12)$, for $\lambda\le1$ and $\gamma$ sufficiently large.
For sufficiently large $\gamma$, we now define $H^+_\gamma(\lambda)=\exp(-c_1\gamma^\frac12)$, for $\lambda\le1$, and
$H^+_\gamma(\lambda)=H_\gamma(\lambda)$, for $\lambda>1$. Then the smallest positive to the equation $H^+(\lambda)=\lambda$
is $\lambda^+(\gamma)=\exp(-c_1\gamma^\frac12)$. This gives the upper bound in the proposition.

The lower
bound is proved similarly using a function $H^-$ satisfying $H^-\le H$. At the point where \cite[equation 3.3]{P09} was used above,
one uses instead \cite[equation 3.6]{P09}. We leave the details to the reader.
\hfill $\square$

\section{Proof of Theorem \ref{th}}
We will use \eqref{lambda2} to evaluate the asymptotic behavior of $\lambda_0(\gamma,\mu)$.
The behavior of the denominator in \eqref{lambda2} is easy.
\begin{lemma}\label{vasym}
For all $\mu\in \mathcal{P}(D)$,
\begin{equation}\label{ptwise}
\lim_{\gamma\to\infty}\gamma v_{\lambda(\gamma,\mu),\gamma}=\frac1V,\ \text{boundedly pointwise in}\ D.
\end{equation}
Thus,
\begin{equation}\label{denomlim}
\lim_{\gamma\to\infty}\gamma\int_Dv_{\lambda_0(\gamma,\mu),\gamma}d\mu=\int_D\frac1Vd\mu.
\end{equation}
\end{lemma}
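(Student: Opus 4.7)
The plan is to work with the Feynman--Kac representation
\begin{equation*}
v_{\lambda,\gamma}(x) = E_x\int_0^\tau \exp\!\Bigl(\int_0^t(\lambda-\gamma V(X(s)))\,ds\Bigr)\,dt,
\end{equation*}
already established as \eqref{vs} in the proof of Lemma 1, and to establish the two claims in turn: first a uniform upper bound on $\gamma v_{\lambda_0(\gamma,\mu),\gamma}$, and then a pointwise limit obtained via a time rescaling. The uniform bound is immediate from lower-bounding $V(X(s))$ by $\min_{\bar D}V$ in the exponent and extending the $t$-integration to $[0,\infty)$, which gives
\begin{equation*}
v_{\lambda_0(\gamma,\mu),\gamma}(x) \leq \frac{1}{\gamma\min V - \lambda_0(\gamma,\mu)}.
\end{equation*}
By Lemma \ref{sublin}, $\lambda_0(\gamma,\mu)/\gamma\to 0$, so for all sufficiently large $\gamma$ one has $\lambda_0(\gamma,\mu)\le\gamma\min V/2$, yielding the uniform bound $\gamma v_{\lambda_0(\gamma,\mu),\gamma}(x)\le 2/\min V$ for every $x\in D$.

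For the pointwise limit I would substitute $u=\gamma t$ (and $w=\gamma s$ in the inner integral) to recast the representation as
\begin{equation*}
\gamma v_{\lambda_0(\gamma,\mu),\gamma}(x) = E_x\int_0^{\gamma\tau} \exp\!\Bigl(\frac{\lambda_0(\gamma,\mu)}{\gamma}\,u - \int_0^u V(X(w/\gamma))\,dw\Bigr)\,du.
\end{equation*}
Fix $x\in D$. Almost surely $\tau>0$, so $\gamma\tau\to\infty$; by the a.s.\ continuity of Brownian paths, $X(w/\gamma)\to x$ uniformly on bounded $w$-intervals, and hence $V(X(w/\gamma))\to V(x)$. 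Combined with $\lambda_0(\gamma,\mu)/\gamma\to 0$, the integrand converges pointwise in $u$ to $\exp(-V(x)u)$, and for $\gamma$ large is dominated by $\exp(-(\min V)u/2)$, which is integrable on $[0,\infty)$. Dominated convergence, applied inside the expectation, then delivers
\begin{equation*}
\gamma v_{\lambda_0(\gamma,\mu),\gamma}(x) \longrightarrow \int_0^\infty \exp(-V(x)u)\,du = \frac{1}{V(x)},
\end{equation*}
which is \eqref{ptwise}. The integral statement \eqref{denomlim} then follows at once from the uniform bound of the first step and the bounded convergence theorem applied to the measure $\mu$.

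The one place demanding any real care is keeping the dominating estimate $\exp(-(\min V)u/2)$ uniform in $\gamma$ despite the moving parameter $\lambda_0(\gamma,\mu)$ in the exponent; this is exactly the role of Lemma \ref{sublin}, without which neither the uniform bound nor the pointwise limit would close. Beyond that, I expect the argument to be entirely routine.
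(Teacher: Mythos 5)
Your argument is correct, but it follows a genuinely different route from the paper's. The paper proves \eqref{ptwise} analytically: it introduces the approximate solution $z_\gamma=\frac1{\gamma V-\lambda_0(\gamma,\mu)}$, observes that $w_\gamma=v_{\lambda_0(\gamma,\mu),\gamma}-z_\gamma$ solves a Dirichlet problem whose boundary data and inhomogeneity $y_\gamma=\frac12\Delta z_\gamma$ are of size $O(1/\gamma)$, and then uses the Feynman--Kac formula for $w_\gamma$ together with Lemma \ref{sublin} and the bound $\lambda_0(\gamma,\mu)-\gamma V\le-\frac12(\min V)\gamma$ to conclude $\gamma w_\gamma\to0$. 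You instead work directly with the probabilistic representation \eqref{vs}, rescale time by $u=\gamma t$, and pass to the limit by dominated convergence on the product of the path space and $[0,\infty)$, with the dominating function $\exp(-\tfrac12(\min V)u)$ supplied---exactly as you note---by Lemma \ref{sublin}; the a.s.\ facts $\tau>0$ and path continuity give the pointwise limit $\exp(-V(x)u)$, and your uniform bound $\gamma v_{\lambda_0(\gamma,\mu),\gamma}\le 2/\min V$ yields both the ``boundedly'' qualifier and \eqref{denomlim} via bounded convergence in $\mu$. The trade-off: the paper's computation of $\Delta z_\gamma$ implicitly uses two derivatives of $V$ (harmless, since the lemma is applied under the hypotheses of Theorem \ref{th}, where $V\in C^{2,\alpha}$ or better), whereas your purely probabilistic argument needs only the standing assumptions $V\in C^\alpha(\bar D)$, $V>0$, so it proves the lemma in the generality in which it is stated; the paper's approach, on the other hand, is quantitative, giving an explicit rate for $\gamma v_\gamma-\frac1{V}$ of the form $O(1/\gamma)$ plus exponentially small boundary terms, which the soft dominated-convergence argument does not provide (though the statement does not require it).
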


\begin{proof}
For notational convenience we write  $v_\gamma\equiv v_{\lambda_0(\gamma,\mu),\gamma}$.
Define $z_\gamma=\frac1{\gamma V-\lambda_0(\gamma,\mu)}$, $w_\gamma= v_\gamma-z_\gamma$ and
 $y_\gamma=\frac12\Delta z_\gamma$.  Then using \eqref{ve},  $w_\gamma$ solves the equation
\begin{equation*}
\begin{aligned}
&\frac12\Delta w_\gamma+(\lambda_0(\gamma,\mu)-\gamma  V)w_\gamma=-y_\gamma\ \text{in}\ D;\\
& w_\gamma=\frac1{\lambda_0(\gamma,\mu)-\gamma  V}\ \text{on}\ \partial D.
\end{aligned}
\end{equation*}
From the Feynman-Kac formula, one has
\begin{equation}\label{diff}
\begin{aligned}
w_\gamma(x)
&=E_x\frac1{\lambda_0(\gamma,\mu)-\gamma  V(X(\tau))}\exp(\int^\tau_0(\lambda_0(\gamma,\mu)-\gamma  V(X(t)))dt)\\
&+E_x\int^\tau_0y_\gamma(X(t))\exp(\int^t_0(\lambda_0(\gamma)-\gamma  V(X(s)))ds)dt.\\
\end{aligned}
\end{equation}
By Lemma \ref{sublin},  $\gamma y_\gamma$  and $\frac\gamma{\lambda_0(\gamma,\mu)-\gamma  V}$ are  bounded
as $\gamma\to\infty$, and  $\lambda_0(\gamma,\mu)-\gamma  V\le -\frac12(\min  V)\gamma$,
 for large $\gamma$. Thus, for some $C>0$, we have
\begin{equation*}
|\gamma w_\gamma(x)|\le CE_x\exp(-\frac12(\min V)\gamma\tau)+CE_x\int_0^\tau\exp(-\frac12(\min V)\gamma t)dt.
\end{equation*}
Thus, $\lim_{\gamma\to\infty}|\gamma w_\gamma(x)|=0$, which along with Lemma \ref{sublin}
gives \eqref{ptwise}.
\end{proof}
\medskip
We now turn to the analysis of the numerator in \eqref{lambda2}.
We will need the following key result, essentially from \cite{P09}.
\begin{lemma}\label{key}
Let $x_0 \in \partial D$ and let $n$ denote the inward unit normal to $D$ at $\partial D$. Then
\begin{equation*}
\lim_{\gamma\rightarrow\infty}\gamma^{-\frac{1}2}(\nabla u_{\lambda_0(\gamma,\mu),\gamma} \cdot n)(x_0)=-\sqrt{2V(x_0)}.
\end{equation*}
\end{lemma}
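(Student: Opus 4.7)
The plan is a boundary-layer analysis via barrier comparison. By Lemma \ref{sublin}, $\lambda_0(\gamma,\mu) = o(\gamma)$, so for large $\gamma$ the coefficient $\lambda_0(\gamma,\mu) - \gamma V(x)$ is uniformly large and negative on $\bar D$, and the equation for $u_{\lambda_0(\gamma,\mu),\gamma}$ reduces heuristically to the singular perturbation $\tfrac12\Delta u = \gamma V u$ with $u=1$ on $\partial D$. The one-dimensional profile with potential frozen at $V_0$ is $u(t) = e^{-\sqrt{2\gamma V_0}\,t}$, whose normal derivative at $t=0$ is $-\sqrt{2\gamma V_0}$. Freezing $V$ at $x_0$ therefore predicts $(\partial_n u_{\lambda_0,\gamma})(x_0)/\sqrt\gamma \to -\sqrt{2V(x_0)}$, which is exactly the claim.

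To execute this, I would localize near $x_0$ using Fermi coordinates $(t,y)$ with $t = \mathrm{dist}(\cdot,\partial D)$; in these coordinates $\Delta = \partial_t^2 + m(t,y)\partial_t + \Delta_{\Sigma_t}$, with $m$ smooth and bounded. Fix $\epsilon>0$ small and a thin tubular neighborhood $U = \{0\le t < \delta\}\times\Sigma$ on which $|V - V(x_0)| < \epsilon$, and set $\alpha_\pm = \sqrt{2\gamma(V(x_0)\pm 2\epsilon)}$. I would propose the barriers
\[
\psi^{+}(t) = \frac{\cosh(\alpha_-(\delta/2 - t))}{\cosh(\alpha_-\delta/2)}, \qquad \psi^{-}(t,y) = \eta(y)\,\frac{\sinh(\alpha_+(\delta - t))}{\sinh(\alpha_+\delta)},
\]
where $\eta(y)$ is a smooth tangential cutoff equal to $1$ near $x_0$ and compactly supported in $\Sigma$. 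A direct substitution shows that for large $\gamma$, $\psi^+$ is a super-solution and $\psi^-$ a sub-solution of $\tfrac12\Delta w + (\lambda_0(\gamma,\mu) - \gamma V)w = 0$ on $U$: the $\tfrac12\alpha_\pm^2$-contribution differs from $\gamma V$ by a margin of order $\gamma\epsilon$ with the correct sign, which swallows the $O(\sqrt\gamma)$ mean-curvature drift $m\,\partial_t\psi$, the $o(\gamma)$ contribution from $\lambda_0$, and the bounded error from the cutoff. The boundary data match easily: $\psi^{\pm} = u = 1$ on $\{t=0\}$; on the inner face $\{t=\delta\}$, $\psi^- = 0 \le u$ and $\psi^+ = 1 \ge u$ (using $u \le 1$, which follows from the maximum principle applied to $\tfrac12\Delta u = (\gamma V - \lambda_0)u$ once $\gamma V > \lambda_0$); and on the lateral boundary the cutoff forces $\psi^- = 0 \le u$ while $\psi^+ = 1 \ge u$. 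The maximum principle then gives $\psi^{-} \le u_{\lambda_0,\gamma} \le \psi^{+}$ on $\bar U$, and since all three functions coincide at $x_0$, comparing inward normal derivatives yields
\[
-\alpha_+\coth(\alpha_+\delta) \le (\partial_n u_{\lambda_0,\gamma})(x_0) \le -\alpha_-\tanh(\alpha_-\delta/2).
\]
Dividing by $\sqrt\gamma$ and letting first $\gamma\to\infty$ (so both hyperbolic factors tend to $1$) and then $\epsilon\downarrow 0$ produces the stated limit.

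The principal obstacle is the verification of the sub-solution property of $\psi^-$ in the transition region where $\eta$ decreases from $1$ to $0$. There $\Delta_{\Sigma_t}\eta$ is bounded but $\eta$ may be arbitrarily small, so the ratio $\Delta_{\Sigma_t}\eta/\eta$ is not uniformly controlled; one must either design the cutoff carefully (for example to vanish quadratically on $\{\eta=0\}$) or apply the maximum principle on the smaller subdomain $\{\eta \equiv 1\}$ and dispose of the additional lateral boundary by exploiting the positivity of $u$ there. The Fermi-coordinate mean-curvature drift introduces a further $O(\sqrt\gamma)$ error, but this is the easier term, being directly dominated by the $2\epsilon$-buffer built into $\alpha_\pm$. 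Both technical points are precisely those addressed in \cite{P09} for constant $V$; the variable-$V$ case reduces to that analysis via the localization above, since the dependence on $V$ enters only through the ratios $V(x)/V(x_0) = 1 + O(\epsilon)$ on $U$.
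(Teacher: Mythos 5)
Your overall strategy (freeze $V$ near $x_0$, build one-dimensional exponential-type barriers at rates $\sqrt{2\gamma(V(x_0)\pm 2\epsilon)}$, apply the maximum principle in a thin tube, compare inward normal derivatives at $x_0$, then let $\gamma\to\infty$ and $\epsilon\downarrow 0$) is the right heuristic, but the execution has genuine gaps at exactly the points where the work lies. First, the boundary comparison for the supersolution is based on a false statement: $\psi^+(t)=\cosh(\alpha_-(\delta/2-t))/\cosh(\alpha_-\delta/2)$ is \emph{not} equal to $1$ on the lateral boundary of the tube; it dips to $1/\cosh(\alpha_-\delta/2)$ at depth $t=\delta/2$, which is exponentially small in $\sqrt\gamma$. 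To conclude $u\le\psi^+$ on the lateral face you would need a quantitative interior decay estimate for $u$ at depth $t$ with rate at least $\alpha_-=\sqrt{2\gamma(V(x_0)-2\epsilon)}$; the cheap global bound $u(x)\le E_x\exp(-(\gamma\min_{\bar D}V-\lambda_0)\tau)$ only gives a rate governed by $\min_{\bar D}V$, which can be strictly smaller than $V(x_0)-2\epsilon$, so the comparison is not justified as written (it can be repaired, e.g.\ by adding to $\psi^+$ a tangential supersolution correction that is $\ge 1$ on the lateral face and exponentially small at $x_0$, but that is additional work your proposal does not contain). Second, for the subsolution the mean-curvature drift is \emph{not} swallowed by the $\gamma\epsilon$ margin near the zero of the profile: with $\phi(t)=\sinh(\alpha_+(\delta-t))/\sinh(\alpha_+\delta)$ one has $|\phi'|/\phi=\alpha_+\coth(\alpha_+(\delta-t))\to\infty$ as $t\to\delta$, so the inequality $\tfrac12\Delta\psi^-+(\lambda_0-\gamma V)\psi^-\ge 0$ fails in a layer of width of order $1/(\gamma\epsilon)$ near $t=\delta$ (fixable by using, say, $e^{-\alpha_+t}-e^{-\alpha_+\delta}$ instead, whose extra constant term dominates the drift, but your claimed verification does not go through for the sinh barrier). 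Third, the cutoff problem you flag is real, and neither of your proposed remedies closes it: a cutoff vanishing quadratically still has $\Delta_{\Sigma}\eta/\eta$ unbounded near $\{\eta=0\}$, and working on $\{\eta\equiv1\}$ requires a quantitative \emph{lower} bound for $u$ on the new lateral face comparable to $e^{-\alpha_+t}$, which mere positivity of $u$ does not supply; getting such a bound is essentially the content of the lower barrier argument itself. The final normal-derivative comparison at $x_0$ and the use of Lemma \ref{sublin} to control $\lambda_0-\gamma V$ are fine.

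For comparison, the paper does not redo this boundary-layer analysis at all. It quotes the constant-$V$ result of \cite{P09}, observes by Brownian scaling that it holds for any constant $V$, notes that the argument in \cite{P09} is local so only the coefficients near $x_0$ matter, and then handles variable $V$ by a monotonicity argument: replacing $V$ by $V-\epsilon$ increases $u_{\lambda,\gamma,\epsilon}$ pointwise (maximum principle) while keeping it $\le 1$ for large $\gamma$, so $(\nabla u_{\lambda,\gamma,\epsilon}\cdot n)(x_0)$ is monotone in the shift, and sandwiching $V$ between the constants $V(x_0)\mp\epsilon$ near $x_0$ yields the limit. That route delegates precisely the technical barrier constructions you are attempting to \cite{P09}, which is why the paper's proof is a few lines; if you want a self-contained variable-$V$ proof along your lines, you must actually carry out the repairs indicated above.
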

\begin{proof}
The result was proved in \cite{P09} for the case that $V=1$. From the scaling, it follows that the result continues to hold for
$V$ equal to any constant. The method of proof in \cite{P09} used localization; only the behavior near $x_0$ of the coefficients of the differential equation
solved by $u_{\lambda_0(\gamma,\gamma),\gamma}$ are relevant.
In addition, by the maximum principle,  the solutions $u_{\lambda,\gamma,\epsilon}$
to \eqref{ue} with $V(x)$ replaced by the  $V-\epsilon$
are pointwise monotone increasing in $\epsilon\in R$, and for $\gamma$ sufficiently large so that
$\lambda-\gamma V(x)$ is everywhere non-positive, they are also bounded above by 1.
Thus, $(\nabla u_{\lambda,\gamma,\epsilon}\cdot n)(x_0)$  is
monotone decreasing in $\epsilon\in R$. From these facts one deduces the lemma.
\end{proof}

 By assumption, the measure $\mu$ can be written as $\mu=\mu_{\text{cs}}+\mu_{\text{reg}}$,
where $\mu_{cs}$ is a compactly supported sub-probability measure and $\mu_{\text{reg}}$ is a sub-probability measure possessing
a density which satisfies the smoothness conditions in the statement of the theorem, and which we will also denote by $\mu_{\text{reg}}$.
Note that $\mu_{\text{reg}}(x)$ restricted to $D^\epsilon$ coincides with the density $\mu(x)$ appearing in the statement of the theorem.
 We write \eqref{lambda2} as
\begin{equation}\label{mureg}
\lambda_0(\gamma,\mu)=
\frac{\int_Du_{\lambda_0(\gamma,\mu),\gamma}\mu_{\text{reg}}dx}
{\int_Dv_{\lambda_0(\gamma,\mu),\gamma}d\mu}+
\frac{\int_Du_{\lambda_0(\gamma,\mu),\gamma}d\mu_{\text{cs}}}
{\int_Dv_{\lambda_0(\gamma,\mu),\gamma}d\mu}.
\end{equation}
Using \eqref{us} and Lemma \ref{sublin} for the first inequality below, and \cite[equation 3.3]{P09} and the fact that $\mu_{\text{cs}}$ is compactly supported
for the second one, one has for some $c>0$ and large $\gamma$,
\begin{equation}\label{hittingtime}
u_{\lambda_0(\gamma,\mu),\gamma}(x)\le E_x\exp(-\frac12(\min V)\gamma\tau_D)\le \exp(-c\gamma^\frac12),\ \text{for all}\
x\in\text{supp}(\mu_{\text{cs}}).
\end{equation}
From \eqref{hittingtime} and Lemma \ref{vasym}, there exists a $C>0$ such that for large $\gamma$,
\begin{equation}\label{negl}
\frac{\int_Du_{\lambda_0(\gamma,\mu),\gamma}d\mu_{\text{cs}}}
{\int_Dv_{\lambda_0(\gamma,\mu),\gamma}d\mu}
\le \exp(-C\gamma^\frac12).
\end{equation}

In the statement of the theorem, note that \eqref{1} is in fact the same as \eqref{even} with $k=0$.
(We simply separated this case out for the sake of exposition.)
Thus to prove the theorem, we must show that \eqref{even} holds for even $k\ge0$ and that \eqref{odd} holds for odd $k\ge1$.
In light of  \eqref{mureg}, \eqref{negl} and Lemma \ref{vasym}, the theorem will be proved if we show
that
\begin{equation}\label{finalodd}
\lim_{\gamma\to\infty}\gamma^{\frac{k+1}2}
\int_Du_{\lambda_0(\gamma,\mu),\gamma}\mu_{\text{reg}}dx=
\frac{\int_{\partial D}
V^{-\frac{k+1}2}\nabla(\Delta^{\frac{k-1}2}\mu)\cdot nd\sigma}{2^{\frac{k+1}2}}, \ \text{for odd}\ k\ge1,
\end{equation}
and that
\begin{equation}\label{finaleven}
\lim_{\gamma\to\infty}\gamma^{\frac{k+1}2}\int_Du_{\lambda_0(\gamma,\mu),\gamma}\mu_{\text{reg}}dx=
\frac{\int_{\partial D}
V^{-\frac{k+1}2}\Delta^{\frac k2}\mu d\sigma}{2^{\frac{ k+1}2}}, \ \text{for even}\ k\ge0.
\end{equation}

Although we could give   a steam-lined proof that works simultaneously for all  even $k$ and another
one that works for all odd $k$, we prefer the following route, in the interest of clarity of exposition.
We will first show \eqref{finalodd} for $k=1$ and \eqref{finaleven} for $k=0$.
Then we will show how to iterate
the method for $k=1$ to obtain \eqref{finalodd} for $k=3$ and  will note how to continue for general odd $k$. Then we will show  how to iterate
the method for $k=0$ to obtain \eqref{finaleven} for $k=2$ and will note how to continue for general even $k$.

We begin with $k=1$, which is easier than $k=0$. Recalling that $n$ denotes the unit inward normal,
and using \eqref{ue} and the fact that $\mu_{\text{reg}}$ vanishes on $\partial D$, integration by parts gives
\begin{equation}\label{k=1}
\begin{aligned}
&\gamma\int_Du_{\lambda_0(\gamma,\mu),\gamma}\mu_{\text{reg}}dx=\int_D(\frac12\Delta u_{\lambda_0(\gamma,\mu),\gamma})
\frac{\gamma \mu_{\text{reg}}}{\gamma V-\lambda_0(\gamma,\mu)}dx=\\
&\frac12\int_Du_{\lambda_0(\gamma,\mu),\gamma}
(\Delta\frac{\gamma\mu_{\text{reg}}}{\gamma V-\lambda_0(\gamma,\mu)})dx+
\frac12\int_{\partial D}\nabla(\frac{\gamma\mu_{\text{reg}}}{\gamma V-\lambda_0(\gamma,\mu)})\cdot nd\sigma.
\end{aligned}
\end{equation}
By Lemma \ref{sublin}, as $\gamma\to\infty$, $\Delta\frac{\gamma\mu_{\text{reg}}}{\gamma V-\lambda_0(\gamma,\mu)}$
converges boundedly pointwise to $\Delta\frac{\mu_{\text{reg}}}V$.
By \eqref{hittingtime}, $u_{\lambda_0(\gamma,\mu),\gamma}$ converges boundedly pointwise to 0 in $D$.
Also, $\nabla(\frac{\gamma\mu_{\text{reg}}}{\gamma V-\lambda_0(\gamma,\mu)})\cdot n$ converges boundedly pointwise
to $\nabla\frac{\mu_{\text{reg}}}V$ on $\partial D$.
Since $\mu_{\text{reg}}$ vanishes on $\partial D$, one has $\nabla\frac{\mu_{\text{reg}}}V=V^{-1}\nabla
\mu_{\text{reg}}$ on $\partial D$. Using these facts and
 letting $\gamma\to\infty$ in \eqref{k=1} gives \eqref{finalodd} for $k=1$.

We now turn to the case $k=0$. For large $\gamma$, let $w_\gamma$ solve the equation
\begin{equation}\label{aux}
\begin{aligned}
&\Delta \frac{w_\gamma}{\gamma V-\lambda_0(\gamma,\mu)}=0\ \text{in}\ D;\\
&w_\gamma=\mu_{\text{reg}}\ \text{on}\ \partial D.
\end{aligned}
\end{equation}
We will show below that
\begin{equation}\label{diff}
\lim_{\gamma\to\infty}\gamma^\frac12\int_Du_{\lambda_0(\gamma,\mu),\gamma}(\mu_{\text{reg}}-w_\gamma)dx=0.
\end{equation}
Thus, it is enough to show \eqref{finaleven} for $k=0$ with $\mu_{\text{reg}}$
replaced by $w_\gamma$.
Using \eqref{ue} and \eqref{aux}, and integrating by parts, we have
\begin{equation}\label{k=0}
\begin{aligned}
&\gamma^{\frac12}\int_Du_{\lambda_0(\gamma,\mu),\gamma}w_\gamma dx=\gamma^{-\frac12}\int_D
(\frac12\Delta u_{\lambda_0(\gamma,\mu),\gamma})\frac{\gamma w_\gamma}{\gamma V-\lambda_0(\gamma,\mu)}dx=\\
&-\frac{\gamma^{-\frac12}}2\int_{\partial D}\frac{\gamma \mu_{\text{reg}}}{\gamma V-\lambda_0(\gamma,\mu)}
(\nabla u_{\lambda_0(\gamma,\mu),\gamma}\cdot n) d\sigma,
\end{aligned}
\end{equation}
where we have used the fact that
$$
\int_{\partial D}\nabla( \frac{\gamma w_\gamma}{\gamma V-\lambda_0(\gamma,\mu)})\cdot nd\sigma=
\int_D\Delta \frac{\gamma w_\gamma}{\gamma V-\lambda_0(\gamma,\mu)}dx=0.
$$
Letting $\gamma\to\infty$ in \eqref{k=0} and using Lemma \ref{key} and
Lemma \ref{sublin}, we obtain
\begin{equation}\label{k=00}
\lim_{\gamma\to\infty}\gamma^{\frac12}\int_Du_{\lambda_0(\gamma,\mu),\gamma}w_\gamma dx=
\frac{\int_{\partial D}
\frac{\mu_{\text{reg}}}{\sqrt V}d\sigma}{2^{\frac12}},
\end{equation}
which is \eqref{finaleven} for $k=0$ with $\mu_{\text{reg}}$ replaced by $w_\gamma$.

To complete the proof of the case $k=0$, we now prove
\eqref{diff}.
For $\epsilon>0$, we have
\begin{equation}\label{diffeps}
|\gamma^\frac12\int_{D^\epsilon}u_{\lambda_0(\gamma,\mu),\gamma}(\mu_{\text{reg}}-w_\gamma)dx|\le\sup_{x\in D^\epsilon}
|\mu_{\text{reg}}(x)-w_\gamma(x)|
(\gamma^\frac12\int_Du_{\lambda_0(\gamma,\mu),\gamma}dx).
\end{equation}
Note that by Lemma \ref{sublin},  $w\equiv\lim_{\gamma\to\infty}w_\gamma$ solves $\Delta_Vw=0$ in $D$ and $w=\mu_{\text{reg}}$
on $\partial D$.
From standard results, it then follows that
\begin{equation}\label{diffbound}
\lim_{\epsilon\to0}\sup_{x\in D^\epsilon}
|\mu_{\text{reg}}(x)-w_\gamma(x)|=0.
\end{equation}
In the  case that, say, $\mu_{\text{reg}}\equiv1$ on $\partial D$, it follows from the maximum principle that
$w_\gamma$ is strictly positive in $\bar D$, uniformly over large $\gamma$.
By the maximum principle and Lemma \ref{sublin}, $u_{\lambda_0(\gamma,\mu),\gamma}$ is decreasing in $\gamma$, for large $\gamma$.
Using these facts with \eqref{k=00},
 it follows that for \it all\rm\ choices of $\mu$, one has that  $\gamma^\frac12\int_Du_{\lambda_0(\gamma,\mu),\gamma} dx$ is bounded as $\gamma\to\infty$.
Using this with \eqref{diffbound}, it follows from \eqref{diffeps} that
\begin{equation}\label{finaldiff}
\lim_{\epsilon\to0}\limsup_{\gamma\to\infty}
|\gamma^\frac12\int_{D^\epsilon}u_{\lambda_0(\gamma,\mu),\gamma}(\mu_{\text{reg}}-w_\gamma)dx|=0.
\end{equation}
By \eqref{hittingtime} and the uniform boundedness in $\gamma$
of $w_\gamma$, it follows that
\begin{equation}\label{inside}
\lim_{\gamma\to\infty}\gamma^\frac12\int_{D-D^\epsilon}u_{\lambda_0(\gamma,\mu),\gamma}(\mu_{\text{reg}}-w_\gamma)dx=0.
\end{equation}
Now \eqref{diff} follows from \eqref{finaldiff} and \eqref{inside}.

We now consider the cases $k=2$ and $k=3$, beginning with $k=3$.
In the case $k=3$, $\mu$ and all its derivatives up to order 2 vanish on $\partial D$;
in particular, the last term on the right hand side of \eqref{k=1} is 0.
Thus, using \eqref{ue} again, integrating by parts and using the fact that the second order derivatives of $\mu$ vanish
on $\partial D$, we have from \eqref{k=1}
\begin{equation}\label{k=3}
\begin{aligned}
&\gamma^2\int_Du_{\lambda_0(\gamma,\mu),\gamma}\mu_{\text{reg}}dx=
\frac\gamma2\int_Du_{\lambda_0(\gamma,\mu),\gamma}(\Delta\frac{\gamma\mu_{\text{reg}}}{\gamma V-\lambda_0(\gamma,\mu)})dx=\\
&\frac12\int_D(\frac12\Delta u_{\lambda_0(\gamma,\mu),\gamma})\frac\gamma{\gamma V-\lambda_0(\gamma,\mu)}
(\Delta\frac{\gamma\mu_{\text{reg}}}{\gamma V-\lambda_0(\gamma,\mu)})dx=\\
&\frac1{2^2}\int_Du_{\lambda_0(\gamma,\mu),\gamma}(\Delta\frac\gamma{\gamma V-\lambda_0(\gamma,\mu)}
\Delta\frac{\gamma\mu_{\text{reg}}}{\gamma V-\lambda_0(\gamma,\mu)})dx+\\
&\frac1{2^2}\int_{\partial D}
\nabla(\frac\gamma{\gamma V-\lambda_0(\gamma,\mu)}
\Delta\frac{\gamma\mu_{\text{reg}}}{\gamma V-\lambda_0(\gamma,\mu)})\cdot nd\sigma.
\end{aligned}
\end{equation}
By Lemma \ref{sublin}, as $\gamma\to\infty$, $\Delta\frac\gamma{\gamma V-\lambda_0(\gamma,\mu)}
\Delta\frac{\gamma\mu_{\text{reg}}}{\gamma V-\lambda_0(\gamma,\mu)}$ converges boundedly pointwise to $\Delta\frac1V\Delta\frac{\mu_{\text{reg}}}V$, and
by \eqref{hittingtime}, $u_{\lambda_0(\gamma,\mu),\gamma}$ converges boundedly pointwise to 0.
Also, $\nabla(\frac\gamma{\gamma V-\lambda_0(\gamma,\mu)}
\Delta\frac{\gamma\mu_{\text{reg}}}{\gamma V-\lambda_0(\gamma,\mu)})\cdot n$ converges boundedly pointwise on $\partial D$
to $\nabla(\frac1V
\Delta\frac{\mu_{\text{reg}}}V)\cdot n$.
Since $\mu$ and all of its derivatives up to order 2 vanish on $\partial D$,
one has $\nabla(\frac1V
\Delta\frac{\mu_{\text{reg}}}V)=V^{-2}\nabla(\Delta \mu_{\text{reg}})$ on $\partial D$.
Thus, letting $\gamma\to\infty$ in \eqref{k=3} gives
\eqref{finalodd} for $k=3$.
Note that in \eqref{k=3} we needed $\mu_{\text{reg}}$ and $V$ to be 4 times differentiable.
 When $k=5$, the boundary term on the right hand side of \eqref{k=3} vanishes, and one
again uses \eqref{ue} to replace $u_{\lambda_0(\gamma,\mu),\gamma}$ in the first term on the right hand
side of \eqref{k=3} by $(\frac12\Delta u_{\lambda_0(\gamma,\mu),\gamma})\frac1{\gamma V-\lambda_0(\gamma,\mu)}$. This time the calculations requires that $\mu_{\text{reg}}$ and $V$  be 6 times differentiable.
It should be clear how to continue for all odd $k$.

We now consider the case $k=2$. Since $\mu$ and its first order derivatives vanish on $\partial D$, we have
from \eqref{k=1}
\begin{equation}\label{3.9}
\gamma^\frac32\int_Du_{\lambda_0(\gamma,\mu),\gamma}\mu_{\text{reg}}dx=\frac{\gamma^\frac12}2\int_Du_{\lambda_0(\gamma,\mu),\gamma}
(\Delta\frac{\gamma\mu_{\text{reg}}}{\gamma V-\lambda_0(\gamma,\mu)})dx.
\end{equation}
As with the case $k=0$, we define an auxiliary function $w_\gamma$, which satisfies this time the equation
\begin{equation}\label{auxagain}
\begin{aligned}
&\Delta \frac{w_\gamma}{\gamma V-\lambda_0(\gamma,\mu)}=0\ \text{in}\ D;\\
&w_\gamma=\Delta\frac{\gamma\mu_{\text{reg}}}{\gamma V-\lambda_0(\gamma,\mu)}\ \text{on}\ \partial D.
\end{aligned}
\end{equation}
The same argument used to show \eqref{diff} shows that
\begin{equation}\label{diffagain}
\lim_{\gamma\to\infty}\gamma^\frac12\int_Du_{\lambda_0(\gamma,\mu),\gamma}
(w_\gamma-\Delta\frac{\gamma\mu_{\text{reg}}}{\gamma V-\lambda_0(\gamma,\mu)})dx=0.
\end{equation}
From \eqref{3.9} and \eqref{diffagain} we have
\begin{equation}\label{reduced}
\lim_{\gamma\to\infty}\gamma^\frac32\int_Du_{\lambda_0(\gamma,\mu),\gamma}\mu_{\text{reg}}dx=
\lim_{\gamma\to\infty}\frac{\gamma^\frac12}2\int_Du_{\lambda_0(\gamma,\mu),\gamma}w_\gamma dx.
\end{equation}
Using \eqref{ue} and \eqref{auxagain}, and integrating by parts, we have
\begin{equation}\label{final}
\begin{aligned}
&\frac{\gamma^\frac12}2\int_Du_{\lambda_0(\gamma,\mu),\gamma}w_\gamma dx=
\frac{\gamma^{-\frac12}}2\int_D(\frac12\Delta u_{\lambda_0(\gamma,\mu),\gamma})\frac{\gamma w_\gamma}{\gamma V-\lambda_0(\gamma,\mu)}dx=\\
&-\frac{\gamma^{-\frac12}}4\int_{\partial D}(\nabla u_{\lambda_0(\gamma,\mu),\gamma}\cdot n)\frac{\gamma w_\gamma}{\gamma V-\lambda_0(\gamma,\mu)}d\sigma=\\
&-\frac{\gamma^{-\frac12}}4\int_{\partial D}(\nabla u_{\lambda_0(\gamma,\mu),\gamma}\cdot n)
\frac{\gamma }{\gamma V-\lambda_0(\gamma,\mu)}\Delta\frac{\gamma\mu_{\text{reg}}}{\gamma V-\lambda_0(\gamma,\mu)}d\sigma,
\end{aligned}
\end{equation}
where we have used the fact that
$$
\int_{\partial D}\nabla( \frac{\gamma w_\gamma}{\gamma V-\lambda_0(\gamma,\mu)})\cdot nd\sigma=
\int_D\Delta \frac{\gamma w_\gamma}{\gamma V-\lambda_0(\gamma,\mu)}dx=0.
$$
Note that since $\mu$ and its first derivatives vanish on $\partial D$, one has
$\Delta\frac{\mu_{\text{reg}}}V=V^{-1}\Delta\mu_{\text{reg}}$ on $\partial D$. Using this and
letting $\gamma\to\infty$, it follows from \eqref{reduced}, \eqref{final} and Lemma \ref{key} that
\begin{equation}
\lim_{\gamma\to\infty}\gamma^\frac32\int_Du_{\lambda_0(\gamma,\mu),\gamma}\mu_{\text{reg}}dx=
\frac{\int_{\partial D}V^{-\frac32}
\Delta\mu_{\text{reg}}d\sigma}{2^\frac32},
\end{equation}
which is \eqref{finaleven} for $k=2$.
It should be clear how to continue in the same vein for larger even $k$.

We now  explain the smoothness requirement in the case of $k=2$, $k=0$ and then for higher order $k$.
First consider $k=2$.
In order to apply the divergence theorem in \eqref{final} we needed for $w_\gamma$ to be in $C^2(D)\cap C^1(\bar D)$.
For this, we claim that it suffices to have   $\mu_{\text{reg}}\in C^3(\bar D)$ and $V\in C^{3,\alpha}(\bar D)$.
To see this, recall that the standard theory \cite{GT} guarantees that if $L$ is a second-order elliptic operator, then
the equation  $Lu=f$ in $D$ and $u=\phi$ on $\partial D$ has a
solution $u\in C^{2,\alpha}(D)\cap C(\bar D)$ if $f$ and  the coefficients of $L$ are in $C^\alpha(\bar D)$, $\phi$
is continuous and $\partial D$
is a $C^{2,\alpha}$-boundary.  Thus,  by the above smoothness assumptions on $\mu_{\text{reg}}$ and $w_\gamma$,
it follows from \eqref{auxagain}
that
$w_\gamma\in C^{2,\alpha}(D)\cap C(\bar D)$.
Now formally differentiate \eqref{auxagain} with respect to $x_j$, and  formally, let
  $z_\gamma=\frac{\partial w_\gamma}{\partial x_j}$.
  Using the above  smoothness of $w_\gamma$,   and again
  using the above smoothness assumptions on $\mu_{\text{reg}}$ and $V$, one has  formally that $z_\gamma$ satisfies an equation
  of the form   $L_1z_\gamma=f$ in $D$ and
  $z_\gamma=\phi$ on $\partial D$, where $f$ and the coefficients of the operator $L_1$
  belong to $C^\alpha(\bar D)$, and $\phi$ is continuous.
Thus, by the  general theory, the above equation has a solution $z_\gamma\in C^{2,\alpha}(D)\cap C(\bar D)$.
One then shows  that $z_\gamma$ is in fact $\frac{\partial w_\gamma}{\partial x_j}$, which
establishes that $w_\gamma$ is in $C^1(\bar D)$.

For $k=0$, the auxiliary function $w_\gamma$ solves \eqref{aux}. By the line of reasoning in the above paragraph, one
needs $\mu_{\text{reg}}\in C^1(\bar D)$ and $V\in C^{2,\alpha}(\bar D)$.
For higher order even $k$, the auxiliary function $w_\gamma$
that one constructs solves the equation
\begin{equation}
\begin{aligned}
&\Delta \frac{w_\gamma}{\gamma V-\lambda_0(\gamma,\mu)}=0\ \text{in}\ D;\\
&w_\gamma=\Delta_{\gamma,V}^{\frac k2}\mu_{\text{reg}}\ \text{on}\ \partial D,
\end{aligned}
\end{equation}
where the operator $\Delta_{\gamma,V}$ is defined by $\Delta_{\gamma,V}g=\Delta\frac{\gamma g}{\gamma V-\lambda_0(\gamma,\mu)}$.
By the reasoning of the previous paragraph,
one needs
$\mu_{\text{reg}}\in C^{k+1}(\bar D)$ and  $V\in C^{k+1,\alpha}(\bar D)$.

\section{An Open Problem in a Degenerate Case}

Consider the case that $V$ is positive in $D$ but vanishes on $\partial D$.
As was noted in the penultimate paragraph before  Theorem \ref{th},
if $V$ decays to 0 at the boundary  at an appropriate rate, it should increase the tendency of the process
to leave the region, and thus raise the value of $\lambda_0(\gamma,\mu)$.
Indeed, in order for the process to exit the region, when the process is very near the boundary it needs to refrain from jumping.
And as was noted in the second paragraph of the remark after Theorem \ref{th},
assuming that $\mu\not\equiv 0$ on $\partial D$,  if $V$ is of the form $\epsilon+(1-\epsilon|D|)\hat V$, where $\hat V$ is a smooth function which is strictly positive in $D$ and vanishes on $\partial D$,
then as $\epsilon\to0$, the right hand side of \eqref{1} converges to $\infty$.
These facts suggest that in the case that $V$ is smooth and vanishes on $\partial D$, and the density $\mu$ does not vanish
identically on $\partial D$, then $\lambda_0(\gamma,\mu)$ should grow on an order larger  than $\gamma^\frac12$ as $\gamma\to\infty$.
On the other hand, if $V$ is compactly supported in $D-D_\epsilon$,
then by the reasoning in the penultimate paragraph before Theorem \ref{th}, one has
 $\lambda_0(\gamma,\mu)\le\lambda_0^{D_\epsilon}$, where
 $\lambda_0^{D_\epsilon}$ is the principal eigenvalue for $-\frac12\Delta$ in $D_\epsilon$ with
 the Dirichlet boundary condition.
 Thus, it also seems possible that if $V$ decays to 0 at the boundary
 sufficiently fast, then in fact $\lambda_0(\gamma,\mu)$ should be of smaller  order than $\gamma^\frac12$ as $\gamma\to\infty$.

To determine what happens, it should suffice to look at the simple one-dimensional case with
$D=(0,1)$. We consider $V$ with a first-order 0 at the boundary. To make things simple, we choose $V$ symmetric: $V(x)=6x(1-x)$
(we continue with the normalization $\int_DVdx=1$). We take $\mu$ to be Lebesgue measure.
Thus, we have
\begin{equation}\label{special}
\begin{aligned}
&L_{\gamma,\mu}u(x)=-\frac12u''(x)+6\gamma x(1-x)\left(u(x)-\int_0^1u(y)dy\right)\ \text{on}\ (0,1);\\
&u(0)=u(1)=0.
\end{aligned}
\end{equation}
Unfortunately, we are only able to conclude that there exist $c_1,c_2>0$ such that
\begin{equation}\label{weakest}
c_1\gamma^\frac13\le\lambda_0(\gamma,\mu)\le c_2\gamma^\frac23.
\end{equation}

We obtain \eqref{weakest} as follows.
By the criticality theory of second order elliptic operators \cite{P95}, which can be applied to $L_{\gamma,\mu}$ as in \eqref{special},
$\lambda_0(\gamma,\mu)$ can be characterized as the supremum over those $\lambda$ for which there exists a
function $u>0$ on $D$ satisfying $L_{\gamma,\mu}u-\lambda u\ge0$ in $D$.
(It is enough to work with $C^1$ functions that are piecewise $C^2$.)
One can check that if one defines $u(x)=x-\gamma^\frac13x^2$, for $0\le x\le \frac12\gamma^{-\frac13}$, $u(x)=u(\frac12\gamma^{-\frac13})$,
for $\frac12\gamma^{-\frac13}\le x\le \frac12$, and then extends $u$ to $(0,1)$
by making it symmetric with respect to $x=\frac12$, then for sufficiently small $\epsilon>0$, one has
$L_{\gamma,\mu}u-\epsilon\gamma^\frac13 u\ge0$ in $D$. This gives the lower bound in \eqref{weakest}.

Another way to characterize $\lambda_0(\gamma,\mu)$ is that it is the largest $\lambda$ such that
the generalized maximum principle holds for $L_{\gamma,\mu}-\lambda$. That is, the largest $\lambda$ such
that whenever one has  $L_{\gamma,\mu}v-\lambda v\le 0$ in $D$ and $v(0)=v(1)=0$, then necessarily one has
$v\le0$ in $D$. Choosing $u$ as above, one can show that if
 $\epsilon>0$
is sufficiently small, then one has $L_{\gamma,\mu}u-\epsilon\gamma^\frac23 u\le 0$ in $D$. Since $u\ge0$ in $D$, the generalized
maximum principle does not hold and consequently $\lambda_0(\gamma,\mu)\le\epsilon\gamma^\frac23$, giving the upper
bound in \eqref{weakest}. We have experimented with all sorts of much more complicated functions, but have not been
able to improve the above bounds.

The upper bound in  \eqref{weakest} can be understood probabilistically by the following heuristic argument, which
may be able to be made rigorous. If a Brownian motion is at $x$, then the probability that it
will reach 0 by time $s$ is no more than $\exp(-c\frac{x^2}s)$, for some $c>0$. When the process $X(\cdot)$ is
 at $\gamma^{-l}$, the local jump rate is on the order $\gamma^{1-l}$ and thus the expected time
 to jump is on the order $\gamma^{l-1}$. Letting $s=\gamma^{l-1}$ and $x=\gamma^{-l}$, with  $l<\frac13$,
 it follows that for large $\gamma$, any time the   $X(\cdot)$ process finds itself in $[\gamma^{-l},1-\gamma^{-l}]$,
  the probability that the process will hit  0
before jumping is overwhelmingly small. On a fixed time interval $t$, one expects no more than $c\gamma t$ jumps,
for some $c>0$. The probability that all of these jumps will send the process to  $[\gamma^{-l},1-\gamma^{-l}]$ is
at least $(1-2\gamma^{-l})^{c\gamma t}$. So the probability of not exiting by time $t$ is at least on the order
$(1-2\gamma^{-l})^{c\gamma t}$, which is at least $\exp(-c_1\gamma^{1-l}t)$ for some $c_1>0$.
By \eqref{largetime}, we conclude that $\lambda_0(\gamma,\mu)$ grows no faster than $\gamma^{1-l}$ for any $l<\frac13$.

\end{document}